\newtheorem{thm}{Theorem}[section]
\newtheorem{lem}[thm]{Lemma}
\newtheorem{cor}[thm]{Corollary}
\newtheorem{prop}[thm]{Proposition}
\theoremstyle{definition}
\def\fph{\mathbb{F}_{\ph}}
\newcommand{\Z}{\mathbb Z}
\newcommand{\z}{\mathbb Z}
\newcommand{\Q}{\mathbb Q}
\newcommand{\F}{\mathbb F}
\newcommand{\q}{\mathbb Q}
\def\F{\mathbb{F}}
\newcommand{\p}{\mathfrak{p}}
\def\ol{\overline}
\def\al{\alpha}
\def\la{\lambda}
\def\om{\omega}
\def\th{\theta}
\def\md#1{\ \mbox{\rm(mod }{#1})}
\def\nph#1{N_{\ph}(#1)}
\def\npp#1{N_{\ph}^+(#1)}
\def\ph{\phi}
\newcounter{cs}
\newcommand{\casos}{\begin{itemize}}
\newcommand{\fcasos}{\end{itemize}\setcounter{cs}{1}}
\newfont{\tit}{cmr12 scaled \magstep3}
\title{ A note ON  MONOGENEITY of pure number fields}
\author{Lhoussain El Fadil
}
\address{Faculty of Sciences Dhar El Mahraz, P.O. Box  1874 Atlas-Fes, Sidi Mohamed ben Abdellah University,  Morocco
}\email{lhoussain.elfadil@usmba.ac.ma}
\begin{document}

\keywords{Monogeneity, Power integral basis, index, Dedekind's Criterion, Newton polygons}

\subjclass[2010]{11R04, 11Y40}
 
\begin{abstract}
{Gassert's paper "A NOTE ON THE MONOGENEITY OF POWER MAPS"  is cited at least by $17$ papers in the context of monogeneity of pure number fields despite some errors that it contains and remarks on it. In this note, we   point out some of these errors, and make some improvements on it.}
\end{abstract}
    \maketitle
\section{Introduction}
  Let $K$ be a number field generated by  a complex root $\al$ of a monic irreducible polynomial $f(x)\in
  \Z[x]$. { Let $\Z_K$ be its ring of integers.
  It is well know that  the ring $\Z_K$ is a free $\Z$-module of rank $n=[K:\Q]$. Let  $(\Z_K:\Z[\al])$ be the index of $\Z[\al]$ in $\Z_K$.   
  For any rational prime $p$, if $p$ does not divide the index $(\Z_K:\Z[\al])$, then thanks to a well-known { theorem of Dedekind},  the factorization of the ideal  $p \Z_K$ can be directly derived from the factorization of ${\overline f(x)}$ over $\F_p$, where ${\overline f(x)}$ is the reduction of $f(x)$ modulo $p$. Besides in 1894, K. Hensel
developed a powerful approach by showing that the primes of $\z_K$
lying above a prime $p$ are in one-to-one correspondence with
monic irreducible factors of $F(X)$ in $\q_p[X]$. For every prime ideal
corresponding to any irreducible factor in $\q_p[X]$, the
ramification index  and the residue degree together are the same as
those of the local field defined  by the irreducible factor
  { If $(\Z_K:\Z[\th])=1$ for  some $\th\in\Z_K$, then  $(1,\th,\cdots,\th^{n-1})$ is a power integral bases of $\Z_K$. In such a case,  the number field $K$ is said to be monogenic and not monogenic otherwise.
The problem of testing the monogeneity of number fields and constructing power integral bases
have been intensively studied these last four decades, mainly by Ga\'al, Nakahara, 
Pohst, and their collaborators (see for instance \cite{AN, Ga, G19, GO, P}). In \cite{F4}, Funakura, studied the integral bases in pure quartic fields.  In \cite{GR4},  Ga\'al and  Remete, calculated the elements of index $1$, for which the coefficients with absolute value $<10^{1000}$ in the integral basis, of pure quartic field generated by $m^{\frac{1}{4}}$ for {$1< m <10^7$}  and $m\equiv 2,3 \md4$.  In \cite{AN6}, Ahmad, Nakahara, and Husnine proved  that  if $m\equiv 2,3 \md4$ and  $m\not\equiv \mp1\md9$, then the sextic number field generated by $m^{\frac{1}{6}}$ is monogenic.
The same authors showed in \cite{AN},   that if $m\equiv 1 \md4$ and $m\not\equiv \mp1\md9$, then the sextic number field generated by $m^{\frac{1}{6}}$ is not monogenic. In \cite{E6}, based on prime ideal factorization, El Fadil showed that  if $m\equiv 1 \md4$ or $m\not\equiv 1\md9$, then the sextic number field generated by $m^{\frac{1}{6}}$ is not monogenic.
In \cite{GR17}, by applying the  explicit form of the index, Ga\'al  and  Remete obtained new results on  monogeneity of the number  fields generated  by $m^{\frac{1}{n}}$ for $3\le n\le 9$. Recall also that in \cite{G}, Gassert studied the integral closedness of $\Z[\al]$, where $\al$ is a complex root of a monic irreducible polynomial $f(x)=x^n-m\in \Z[x]$. Finally, we cite that  in \cite{E24, E12, E36, E23k}, based on Newton polygon techniques, El Fadil studied the monogeneity of pure number fields of degree $24,\, 12,$, $36$, and $2\cdot 3^k$ respectively. 
In  this paper, we  state some comments regarding Gassert's paper \cite{G}, we point out some errors  in \cite[Proposition 3.4]{G}, and so in the proof of \cite[Proposition 3.5 and Theorem 1.2]{G}, we also  improve \cite[Theorem 1.1 and Theorem 1.2]{G}, and we conclude with Theorem \ref{mono}, which gives sufficient conditions on $f(x)=x^n-m$ in order to have $K$  not monogenic, where $K$ is the number field generated by a complex root of $f(x)$.
\section{{Errors in  \cite{G} and comments}}\label{comments}
\begin{enumerate}
\item
In his paper "A NOTE ON THE MONOGENEITY OF POWER MAPS" \cite{G}, Gassert  introduced the notion of monogenic polynomials as follows: a monic polynomial  $f(x)\in \Z[x]$ is said to be monogenic if it is irreducible and $K=\Q(\al)$ is monogenic, that is  the ring of integers $\Z_K=\Z[\th]$ for some element  $\th\in \Z_K$.
In \cite[Theorem 1.1]{G}, he gave sufficient conditions on  $f(x)=x^n-m\in \Z[x]$,  in order to have $f(x)$ is monogenic. He then gave a remark in which he claimed that the hypothesis of irreduciblity of $f(x)$ is not required in \cite[Theorem 1.1]{G} because under the hypotheses: $m$ is a square free integer and $m^p\not\equiv m \md{p^2}$ for every prime integer $p$ dividing $n$, $f(x)$ is $p$-Eisenstein for some prime integer $p$. 
\begin{enumerate}
\item
{ This remark is not true}, it suffices to consider the following example $f(x)=x^{2^k}+1$ for some non-negative integer $k$. It is clear that $f(x)$  is not $p$-Eisenstein for any prime integer $p$. In Lemma \ref{2^k}, we show that $f(x)=x^{2^k}+1$ is irreducible over $\Q$ for every non-negative integer $k$. 
\item
  Even if the content of \cite[Theorem 1.1]{G} was true, it would  give only a partial answer to the problem of mongeneity of $K$. More precisely, it gives sufficient conditions on $f(x)$ to have $\Z_K=\Z[\al]$. But it does not give any information on the existence of  an other  integral element $\th$ which satisfies $\Z_K=\Z[\th]$. As an easy example, for $f(x)=x^{10}- 10^3$, by applying \cite[Theorem 1.1]{G}, $\Z_K\neq \Z[\al]$. For the question, using the above,  could we claim that $K$ is not monogenic?  Note that if we replace $\al$ by $\th=\frac{\al^3}{10^2}$, then $\th\in K$. As $\th^{10}=10$, then $\th$ is a root of $g(x)=x^{10}-10$, which is $2$-Eisenstein. Thus $\th$ is a primitive element of $K$. By applying again \cite[Theorem 1.1]{G}, we get $\Z_K\neq \Z[\th]$.\\
\textcolor[rgb]{0.00,0.00,1.00}{ In our point of view, \cite[Theorem 1.1]{G} does not deal completely with the problem of monogeneity of $K$, but rather, it deals with the problem  of integral closedness of
$\Z [\al]$. The problem of monogeneity is more hard than that concerning  integral closedness, which contributes partially in the study of monogeneity of $K$.} In section \ref{fix}, we show that  \cite[Theorem 1.1]{G} characterizes the integral closedness of
$\Z [\al]$. 
\end{enumerate}
\item
Regarding \cite[Theorem 1.2]{G}, we split its content  in two parts:
\begin{enumerate}
\item
Let $p$ be a prime integer which divides $m$. If $gcd(n, p, \nu_p(m)) = 1$, then $\nu_p(ind(f))=\frac{(n-1)(\nu_p(m)-1)}{2}+d-1$, where $d=$gcd$(n,\nu_p(n))$.\\
 \textcolor[rgb]{0.00,0.00,1.00}{This part is true. But one can wonder what  is the  meaning of   "gcd$(n,p, \nu_p(m)) = 1$?"}\\
  Under this condition, we have $\ol{f(x)}=\ol{\ph}^n$ in $\F_p[x]$ with $\ph=x$ and $\nph{f}=S$ has a single side of degree $d=$gcd$(n,\nu_p(m))$ and $p$ does not divide $d$. In this case, the associated residual polynomial is $f_S(y)=y^d-m_p$ is a separable polynomial over $\F_p=\fph$, where $m_p=\frac{m}{m^{\nu_p(t)}}\md{p}$. Thus according to the terminolgy of Ore \cite{O, EMN}, $f(x)$ is $p$-regular. It follows by Theorem \ref{ore}, that $\nu_p(ind(f))=ind_\ph(f)=$deg$(\ph)\times ind(S)=\frac{(n-1)(\nu_p(m)-1)+d-1}{2}$ as desired.
 \item 
The second point is also true. But the proof presented in \cite{G} is not correct because it  was based on Proposition $3.5$, whose proof in turn was based on Proposition $3.4$, which as we will see below is not correct.
  \end{enumerate}
\item
Proposition $3.4$ is not correct unless we one add some  supplementary  conditions. In order  to show that this result is not correct,  we consider the example $f(x)=x^{14}-m$ with $m$ is a square free integer and $\nu_2(1-m)\ge 3$. Then $\ol{f(x)}=(x^7-1)^2$ and $x^7-1=(x+1)= (x^3  + x^2  + 1) (1 + x) (x^3  + x + 1)$ in $\F_2[x]$.
For $\ph=x^3+x+1$, we have
$f(x)=(x^2-4)\ph^4+(16-4x^2+10x)\ph^3-(19+4x^2-30x)\ph^3)\ph^2+(6+16x^2+24x)\ph+(1 -m-8x^2-4x)$.
So, $\nph{f}$ has a single side joining the points $(0,2)$, $(1,1)$, and $(4,0)$. This contradicts the claim of Proposition $3.4$, which says that $\nph{f}=S_1+S_2$ has two sides joining the points $(0,3)$, $(1,1)$, and $(4,0)$. \textcolor[rgb]{0.00,0.00,1.00}{The error comes from the fact that  the author assumed that:  for any monic polynomial $\ph\in \Z[x]$, whose reduction is an irreducible factor of $x^t-m$, there exists a polynomial
$h(x)\in \Z[x]$ such that $\ph(x)h(x) = x^t-m$, where $n=p^rt$ and $p$ does not divide $t$, which is not correct because even when $\ol{\ph}$ divides $\ol{x^t-m}$, it is not necessarily that ${\ph}$ divides ${x^t-m}$ in $\Z[x]$}.
  \item
 The comment given by the author after  \cite[Theorem 1.2]{G}, which says
"This theorem gives a second proof of Theorem 1.1. Namely, we see that $E_p = 1$ if and only if $m$ is
square-free and $m^p\not\equiv m \md{p^2}$ for every primeinteger $p$ dividing $n$" is not correct and can be corrected as follows: "$E_p = 0$ if and only if $m$ is
square-free and $m^p\not\equiv m \md{p^2}$ for every primeinteger $p$ dividing $n$". 
\item
More seriously, I can not understand why the author state the comments after Theorem 1.2, on a tower of number fields, despite the fact that the field $K$ is fixed. Moreover this comment is not used anywhere.
\item
The paper finishes with  an extra example, Example 3.6, in which the author considered the number field defined by  an irreducible polynomial $f(x) = x^{6^3}-m$ and gcd$(m, 6) = 1$. The author claimed to caluclate  the $2$-adic and $3$-adic valuations
of the index, given in  two tables. The results are very strange, in fact the $p$-index of a polynomial is always a non-negative integer  and can never  be a decimal number.
 \end{enumerate}
 We will give in section \ref{fix}, two versions improving the statements of both \cite[Theorem 1.1, 1.2]{G}, show the existence  of an error in \cite[Proposition 3.5]{G} and  in the proof of \cite[Theorem 1.2]{G}. We finishe by Theorem \ref{mono}, in which we give sufficient conditions on $n$ and $m$ in order to have $K$  not monogenic.
\section{{Preliminaries}}
In order make clear the technical tools we use in this paper,  we recall some fundamental facts  on  Newton polygon. 
Let $K=\Q(\al)$ be a  number field generated by $\al$  a complex root   of a monic irreducible polynomial $f(x)\in \Z[x]$ of degree $n$ and $\Z_K$ its ring of integers of $K$.
It is well know that   $\Z_K$ is a free $\Z$-module of rank $n$. Thus the abelian group $\Z_K/\Z[\al]$ is finite. Its cardinal order is called the index of $\Z[\al]$, and denoted $ind(f)=(\Z_K: \Z[\al])$.
 For a rational prime integer $p$, if $p$ does not divide $(\mathbb{Z}_K : \mathbb{Z}[\alpha])$, then a well known theorem of Dedekind says that the  factorization of  $ p \mathbb{Z}_K$ can be derived directly  from the factorization of $\overline{f(x)}$ in $\F_p[x]$. In order to apply this  theorem in an effective way, one needs a
criterion to test  whether  $p$ divides  or not the index $(\Z_K:\Z[\al])$. In $1878$, Dedekind   {proved } the  well known Dedekind's criterion (see  \cite[Theorem 6.1.4]{Co}). 
 When Dedekind's criterion fails, { then a method of Ore  $1928$,  based on Newton polygons, can be used} in order to evaluate the  index $(\Z_K:\Z[\al])$, the
absolute discriminant, and the prime ideal factorization of the rational primes into powers of prime ideals of 
a number field $K$ (see \cite{MN, O}).
{ In case Ore's method fails, then an algorithm of  Guardia, Montes, and Nart \cite{GMN},
 using  high order Newton polygons can be used. Such an algorithm } gives after a finite number of iterations a complete answer on the index $(\Z_K:\Z[\al])$, the absolute discriminant $d_K$, and the factorization of $p\Z_K$.
  Let $p$ be a prime integer and $\ol{f(x)}=\prod_{i=1}^r \ol{\ph_i(x)}^{l_i}$ modulo $p$ the factorization of $\ol{f(x)}$ into powers of monic irreducible coprime polynomials of $\F_p[x]$. Recall  the well known  Dedekind's criterion.
  In $1878$, Dedekind { proved }:
  \begin{thm}\label{Ded}$($Dedekind's criterion \cite[Theorem 6.1.4]{Co} and \cite{R}$)$\\
 For a number field $K$ generated by $\al$ a complex root of a monic irreducible  polynomial $f(x)\in \Z[x]$ and a rational prime integer $p$, let $\overline{f}(x)=\prod_{i=1}^r\overline{\ph_i}^{l_i}(x)\md{p}$  be the factorization of   $\overline{f}(x)$ in $\F_p[x]$, where the polynomials $\ph_i\in\Z[x]$ are monic with their reductions irreducible over $\F_p$ and GCD$(\overline{\ph_i},\overline{\ph_j})=1$ for every $i\neq j$. If we set
$M(x)=\cfrac{f(x)-\prod_{i=1}^r{\ph_i}^{l_i}(x)}{p}$, then $M(x)\in \Z[x]$ and the following statements are equivalent:
\begin{enumerate}
\item[1.]
$p$ does not divide the index $(\Z_K:\Z[\al])$.
\item[2.]
For every $i=1,\dots,r$, either $l_i=1$ or $l_i\ge 2$ and $\overline{\ph_i}(x)$ does not divide $\overline{M}(x)$ in $\F_p[x]$.
\end{enumerate}
\end{thm}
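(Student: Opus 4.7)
The plan is to work $p$-locally. The condition $p \nmid (\Z_K : \Z[\alpha])$ is equivalent to $\Z_p \otimes_\Z \Z[\alpha] = \Z_p \otimes_\Z \Z_K$, i.e.\ to the $\Z_p$-order $\Z_p[x]/(f)$ being the maximal $\Z_p$-order in its total ring of fractions. Applying Hensel's lemma to the pairwise coprime factorization $\overline{f} = \prod_i \overline{\phi_i}^{l_i}$ gives a decomposition
\[
\Z_p[x]/(f) \;\cong\; \prod_{i=1}^r A_i, \qquad A_i \cong \Z_p[x]/(F_i),
\]
with $F_i \in \Z_p[x]$ monic and $\overline{F_i} = \overline{\phi_i}^{l_i}$. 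Each $A_i$ is a local $\Z_p$-algebra with residue field $\F_p[x]/(\overline{\phi_i})$, and the global question reduces to asking when each $A_i$ coincides with its integral closure in $A_i \otimes_{\Z_p} \Q_p$.

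For $l_i = 1$, $F_i$ is irreducible in $\Z_p[x]$ with irreducible reduction, so $A_i$ is the ring of integers of an unramified extension of $\Q_p$, hence a DVR and automatically maximal. No condition is required, matching the ``$l_i = 1$'' clause.

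For $l_i \geq 2$, I would exploit the local identity coming from $f(\alpha) = 0$: in $A_i$ one has $\phi_i(\alpha)^{l_i} = -pM_i$, where $M_i$ is the image of $M$. Since each $\overline{\phi_j}$ with $j \neq i$ is coprime to $\overline{\phi_i}$, the product $\prod_{j \neq i}\phi_j(\alpha)^{l_j}$ is a unit in $A_i$. Hence if $\overline{\phi_i} \nmid \overline{M}$, then $M_i$ is a unit in $A_i$, $p$ is associated to $\phi_i(\alpha)^{l_i}$, and $A_i$ is a DVR with uniformizer $\phi_i(\alpha)$ and ramification index $l_i$; in particular $A_i$ is maximal. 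Conversely, assuming $\overline{\phi_i} \mid \overline{M}$ for some $i$ with $l_i \geq 2$, I would produce the candidate
\[
\theta_i \;=\; \frac{1}{p}\,\phi_i(\alpha)^{l_i-1} \prod_{j\neq i}\phi_j(\alpha)^{l_j} \;\in\; \tfrac{1}{p}\Z[\alpha],
\]
show $\theta_i \notin \Z[\alpha]$ by reducing $p\theta_i$ modulo $p\Z[\alpha]$ (its image $\overline{\phi_i}^{\,l_i-1}\prod_{j\neq i}\overline{\phi_j}^{\,l_j}$ in $\F_p[x]/(\overline{f})$ is nonzero), and verify $\theta_i \in \Z_K$ by checking integrality in each local factor $A_j$, thereby witnessing $p \mid (\Z_K:\Z[\alpha])$.

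The main obstacle is the integrality verification for $\theta_i$. In a factor $A_j$ with $j \neq i$, the identity $\phi_j(\alpha)^{l_j} = -pM_j$ absorbs the factor of $1/p$, so integrality there is immediate. In the factor $A_i$, however, one must use both the divisibility $\overline{\phi_i} \mid \overline{M}$ (which yields a decomposition $M_i = \phi_i(\alpha) Q_i + pR_i$ in $A_i$) and the local relation $\phi_i(\alpha)^{l_i} = -pM_i$ to realize $\theta_i$ inside the normalization $\widetilde{A_i}$ of $A_i$; the bookkeeping of exponents and valuations in this local argument, together with checking that $\theta_i$ descends to an element of $\Z_K$ (not merely of $\widetilde{A_i}$), is where the real work lies. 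Once this is settled, the equivalence in the theorem follows by combining the $l_i = 1$ and $l_i \geq 2$ cases across all $i$.
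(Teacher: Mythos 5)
The paper itself contains no proof of Theorem \ref{Ded}: the statement is quoted with references to Cohen \cite[Theorem 6.1.4]{Co} and Dedekind \cite{R}, so there is no internal argument to compare yours with, and your proposal has to be judged against the cited classical proofs. Your route (base change to $\Z_p$, Hensel splitting $\Z_p[x]/(f)\cong\prod_i A_i$ along the pairwise coprime factors $\overline{\phi_i}^{\,l_i}$, then testing maximality of each local factor) is correct and genuinely different in flavour from the argument in Cohen's book, which essentially stays at the level of $\Z[\alpha]/p\Z[\alpha]$ and the $p$-radical; your version is $p$-adic and fits the Ore/Newton-polygon viewpoint used elsewhere in this paper. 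Two harmless repairs: in $A_i$ the relation is $\phi_i(\alpha)^{l_i}u_i=-pM(\alpha)$ with $u_i=\prod_{j\neq i}\phi_j(\alpha)^{l_j}$ a unit of $A_i$, not $\phi_i(\alpha)^{l_i}=-pM_i$ on the nose (this changes nothing); and to conclude that $A_i$ is a DVR when $M(\alpha)$ is a unit you should invoke the standard lemma that a Noetherian local ring whose maximal ideal is generated by a single non-nilpotent element is a discrete valuation ring, non-nilpotence coming from the fact that $p$ is a nonzerodivisor on the $\Z_p$-free module $A_i$.

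The step you flag as ``the real work'' does go through, and the bookkeeping is short. Fix $i$ with $l_i\ge 2$ and $\overline{\phi_i}\mid\overline{M}$, write $M=\phi_iQ+pR$ with $Q,R\in\Z[x]$, and let $\beta$ be any root of $F_i$, with $v$ the valuation of $\overline{\Q_p}$ normalized by $v(p)=1$. From $\phi_i(\beta)^{l_i}u_i(\beta)=-pM(\beta)$ and $v(u_i(\beta))=0$ you get $l_i\,v(\phi_i(\beta))=1+v(M(\beta))$, while $M=\phi_iQ+pR$ gives $v(M(\beta))\ge\min\bigl(v(\phi_i(\beta)),1\bigr)$; separating the cases $v(\phi_i(\beta))\le 1$ and $v(\phi_i(\beta))>1$, these force $(l_i-1)\,v(\phi_i(\beta))\ge 1$, which is exactly the integrality of $\theta_i$ at every prime inside the $i$-th block. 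At the blocks $j\neq i$ you already observed that $\theta_i=-M(\alpha)\phi_i(\alpha)^{-1}$ with $\phi_i(\alpha)$ a unit there, and at primes not above $p$ integrality is trivial because $p\theta_i\in\Z[\alpha]$. There is also no separate ``descent to $\Z_K$'' issue: $\theta_i$ is an element of $K$ by construction, and integrality at all places already gives $\theta_i\in\Z_K$. Combined with your degree argument showing that $\overline{\phi_i}^{\,l_i-1}\prod_{j\neq i}\overline{\phi_j}^{\,l_j}\neq 0$ in $\F_p[x]/(\overline{f})$, hence $\theta_i\notin\Z[\alpha]$, this yields $p\mid(\Z_K:\Z[\alpha])$, and the equivalence closes exactly as you planned.
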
 
 When Dedekind's criterion fails, that is,  $p$ divides the index $(\z_K:\z[\alpha])$  for every generator $\al$ of $K$, {  then} it is not possible to obtain the prime ideal factorization of $p\z_K$ by{ Dedekind's theorem}.
In 1923, Ore developed   an alternative approach
for obtaining the index $(\z_K:\z[\alpha])$, the
absolute discriminant, and the prime ideal factorization of the rational primes in
a number field $K$ by using Newton polygons (see \cite{MN, O}). For more details on Newton polygon techniques, we refer to \cite{El, GMN}.
 For any  prime integer  $p$,{ let $\nu_p$ be the $p$-adic valuation of $\Q$, $\Q_p$ its $p$-adic completion, and $\Z_p$ the ring of $p$-adic integers. Let  $\nu_p$  { be } the Gauss's extension of $\nu_p$ to $\Q_p(x)$; $\nu_p(P)=\mbox{min}(\nu_p(a_i), \, i=0,\dots,n)$ for any polynomial $P=\displaystyle\sum_{i=0}^na_ix^i\in\Q_p[x]$ and extended by $\nu_p(P/Q)=\nu_p(P)-\nu_p(Q)$ for every nonzero polynomials $P$ and $Q$ of $\Q_p[x]$.} Let 
$\phi\in\z_p[x]$ be a   monic polynomial  {\it whose reduction} is irreducible  in
$\F_p[x]$, let $\fph$ be 
the field $\frac{\F_p[x]}{(\overline{\phi})}$. For any
monic polynomial  $f(x)\in \z_p[x]$, upon  the Euclidean division
 by successive powers of $\ph$, we  expand $f(x)$ as follows:
$f(x)=\displaystyle\sum_{i=0}^la_i(x)\phi(x)^{i},$ called    the $\phi$-expansion of $f(x)$
 (for every $i$, deg$(a_i(x))<$
deg$(\phi)$). 
The $\ph$-Newton polygon of $f(x)$ with respect to $p$, is the lower boundary convex envelope of the set of points $\{(i,\nu_p(a_i(x))),\, a_i(x)\neq 0\}$ in the Euclidean plane, which we denote by $\nph{f}$.  The $\ph$-Newton polygon of $f$, is the process of joining the obtained edges  $S_1,\dots,S_r$ ordered by   increasing slopes, which  can be expressed as $\nph{f}=S_1+\dots + S_r$. 
For every side $S_i$ of $\nph{f}$, the length of $S_i$, denoted   $l(S_i)$  is the  length of its projection to the $x$-axis and its height , denoted   $h(S_i)$  is the  length of its projection to the $y$-axis. {Let $d(S_i)=$GCD$(l(S_i), h(S_i))$ be the ramification degree of $S$.
 The principal $\ph$-Newton polygon of ${f}$},
 denoted $\npp{f}$, is the part of the  polygon $\nph{f}$, which is  determined by joining all sides of negative  slopes.
  For every side $S$ {of} $\npp{f}$, with initial point $(s, u_s)$ and length $l$, and for every 
$0\le i\le l$, we attach   the following
{\ residual coefficient} $c_i\in\fph$ as follows:
$$c_{i}=
\displaystyle\left
\{\begin{array}{ll} 0,& \mbox{ if } (s+i,{\it u_{s+i}}) \mbox{ lies strictly
above } S\\
\left(\dfrac{a_{s+i}(x)}{p^{{\it u_{s+i}}}}\right)
\,\,
\md{(p,\phi(x))},&\mbox{ if }(s+i,{\it u_{s+i}}) \mbox{ lies on }S,
\end{array}
\right.$$
where $(p,\phi(x))$ is the maximal ideal of $\z_p[x]$ generated by $p$ and $\ph$. 
Let $\la=-h/e$ be the slope of $S$, where  $h$ and $e$ are two positive coprime integers. Then  $d=l/e$ is the degree of $S$.  Notice that, 
the points  with integer coordinates lying{ on} $S$ are exactly $$\displaystyle{(s,u_s),(s+e,u_{s}-h),\cdots, (s+de,u_{s}-dh)}$$ Thus, if $i$ is not a multiple of $e$, then 
$(s+i, u_{s+i})$ does not lie in $S$, and so $c_i=0$. Let
{$$f_S(y)=t_dy^d+t_{d-1}y^{d-1}+\cdots+t_{1}y+t_{0}\in\fph[y],$$} called  
the residual polynomial of $f(x)$ associated to the side $S$, where for every $i=0,\dots,d$,  $t_i=c_{ie}$.\\

    Let $\npp{f}=S_1+\dots + S_r$ be the principal $\ph$-Newton polygon of $f$ with respect to $p$.\\
     We say that $f$ is a $\ph$-regular polynomial with respect to $p$, if  $f_{S_i}(y)$ is square free in $\fph[y]$ for every  $i=1,\dots,r$. \\
      The polynomial $f$ is said to be  $p$-regular  if $\overline{f(x)}=\prod_{i=1}^r\overline{\ph_i}^{l_i}$ for some monic polynomials $\ph_1,\dots,\ph_t$ of $\Z[x]$ such that $\ol{\ph_1},\dots,\ol{\ph_t}$ are irreducible coprime polynomials over $\F_p$ and    $f$ is  a $\ph_i$-regular polynomial with respect to $p$ for every $i=1,\dots,t$.
 \smallskip
 
The  theorem of Ore plays  a fundamental key for proving our main Theorems:\\
  Let $\ph\in\Z_p[x]$ be a monic polynomial, with $\overline{\ph(x)}$ is irreducible in $\F_p[x]$. As defined in \cite[Def. 1.3]{EMN},   the $\ph$-index of $f(x)$, denoted by $ind_{\ph}(f)$, is  deg$(\ph)$ times the number of points with natural integer coordinates that lie below or on the polygon $\npp{f}$, strictly above the horizontal axis,{ and strictly beyond the vertical axis} (see $Figure\ 1$).
  
  \begin{figure}[htbp] 
\centering
\begin{tikzpicture}[x=1cm,y=0.5cm]
\draw[latex-latex] (0,6) -- (0,0) -- (10,0) ;
\draw[thick] (0,0) -- (-0.5,0);
\draw[thick] (0,0) -- (0,-0.5);
\node at (0,0) [below left,blue]{\footnotesize $0$};
\draw[thick] plot coordinates{(0,5) (1,3) (5,1) (9,0)};
\draw[thick, only marks, mark=x] plot coordinates{(1,1) (1,2) (1,3) (2,1)(2,2)     (3,1)  (3,2)  (4,1)(5,1)  };
\node at (0.5,4.2) [above  ,blue]{\footnotesize $S_{1}$};
\node at (3,2.2) [above   ,blue]{\footnotesize $S_{2}$};
\node at (7,0.5) [above   ,blue]{\footnotesize $S_{3}$};
\end{tikzpicture}
\caption{    \large  $\npp{f}$.}
\end{figure}
In the example of $Figure\ 1$, $ind_\ph(f)=9\times$deg$(\ph)$ and if $\nph{f}^+=S$ has a single side, then $ind_\ph(f)=\mbox{deg}(\ph)\times ind(S)=\mbox{deg}(\ph)\times\frac{(l-1)(h-1)+d-1}{2}$, where $l$ is the length of $S$, $h$ is its height, and $d=$gcd$(l,h)$ (see \cite{GMN}).\\
\smallskip

  Now assume that $\overline{f(x)}=\prod_{i=1}^r\overline{\ph_i}^{l_i}$ is the factorization of $\overline{f(x)}$ in $\F_p[x]$, where every $\ph_i\in\Z[x]$ is monic polynomial, with $\overline{\ph_i(x)}$ is irreducible in $\F_p[x]$, $\overline{\ph_i(x)}$ and $\overline{\ph_j(x)}$ are coprime when $i\neq j$ and $i, j=1,\dots,t$.
For every $i=1,\dots,t$, let  $N_{\ph_i}^+(f)=S_{i1}+\dots+S_{ir_i}$ be the principal  $\ph_i$-Newton polygon of $f$ with respect to $p$. For every $j=1,\dots, r_i$,  let $f_{S_{ij}}(y)=\prod_{k=1}^{s_{ij}}\psi_{ijk}^{a_{ijk}}(y)$ be the factorization of $f_{S_{ij}}(y)$ in $\F_{\ph_i}[y]$. 
  Then we have the following index theorem of Ore (see \cite[Theorem 1.7 and Theorem 1.9]{EMN}, \cite[Theorem 3.9]{El}, and{\cite[pp: 323--325]{MN}}).
 \begin{thm}\label{ore} $($Theorem of Ore$)$
  \begin{enumerate}
 \item
  $$\nu_p(ind(f))=\nu_p((\z_K:\z[\al]))\ge \sum_{i=1}^t ind_{\ph_i}(f).$$  The equality holds if $f(x)$ is $p$-regular.  
\item
If  $f(x)$ is $p$-regular, then
$$p\Z_K=\prod_{i=1}^t\prod_{j=1}^{r_i}
\prod_{k=1}^{s_{ij}}\p^{e_{ij}}_{ijk},$$ 
where{$e_{ij}=l_{ij}/d_{ij}$, $l_{ij}$ is the length of $S_{ij}$,  $d_{ij}$ is the ramification degree}
 of   $S_{ij}$, and $f_{ijk}=\mbox{deg}(\ph_i)\times \mbox{deg}(\psi_{ijk})$ is the residue degree of $\p_{ijk}$ over $p$.
 \end{enumerate}
\end{thm}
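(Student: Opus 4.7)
The plan is to establish Ore's theorem by successive applications of Hensel-type lifting: first over $\F_p$, then over each residue field $\F_{\ph_i}$, and finally interpreting the lattice-point count below each principal polygon as a $p$-adic valuation of the index.

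First, I would apply classical Hensel's lemma to the coprime factorization $\overline{f} = \prod_{i=1}^{t} \overline{\ph_i}^{l_i}$ to obtain a factorization $f = \prod_{i=1}^t F_i$ in $\Z_p[x]$ with $\overline{F_i} = \overline{\ph_i}^{l_i}$ and $\deg(F_i) = l_i \deg(\ph_i)$. Because the $F_i$ are pairwise coprime in $\Z_p[x]$, the prime decomposition of $p\Z_K$ splits as a disjoint union indexed by $i$ (each prime $\p$ of $\Z_K$ above $p$ corresponds to a unique irreducible factor of a unique $F_i$), and the same index of $\Z[\al]$ decomposes additively; this reduces the theorem to the case of a single $\ph$.

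Next, I would work with the $\ph$-Newton polygon $N_\ph^+(F)$. The key technical input is Ore's \emph{theorem of the polygon}: if the principal polygon has sides $S_{1},\dots,S_{r}$ of strictly increasing slopes, then $F = \prod_j F_{j}$ in $\Z_p[x]$ with $N_\ph^+(F_j) = S_j$. A second refinement, the \emph{theorem of the residual polynomial}, asserts that each factorization $f_{S_j}(y) = \prod_k \psi_{jk}^{a_{jk}}(y)$ in $\F_{\ph}[y]$ lifts to a factorization of $F_j$ in $\Z_p[x]$. Under the $p$-regularity assumption every $a_{jk}=1$, so each lifted factor is irreducible; inspecting the slope $-h_j/e_j$ and the degree of $\psi_{jk}$ then shows that the corresponding prime $\p_{jk}$ has ramification index $e_j = l(S_j)/d(S_j)$ and residue degree $\deg(\ph)\deg(\psi_{jk})$, giving part (2).

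For part (1) I would exploit the identity $\nu_p(\mathrm{disc}(f)) = \nu_p(d_K) + 2\nu_p(\mathrm{ind}(f))$ and compute each side locally via the factorization $f = \prod_{i,j} F_{ij}$. For each local factor, the difference between $\nu_p(\mathrm{disc}(F_{ij}))$ and the local contribution to $\nu_p(d_K)$ equals twice the number of integer points strictly above the $x$-axis, strictly to the right of the $y$-axis, and weakly below $S_{ij}$, weighted by $\deg(\ph_i)$; summing recovers $\sum_i \mathrm{ind}_{\ph_i}(f)$. The inequality holds in general because higher-order refinements can only add non-negative terms, with equality exactly when no further refinement is needed, i.e.\ when $f$ is $p$-regular. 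The main obstacle is the polygon-lifting lemma itself: one must construct the factors $F_j$ by an iterative Newton-style approximation in $\Z_p[x]$ and prove convergence in the Gauss valuation on $\Q_p(x)$ twisted by the slope of $S_j$, which is delicate because $\ph$ is not a uniformizer of $\Z_p$ and one is working over a two-dimensional regular local ring rather than a discrete valuation ring.
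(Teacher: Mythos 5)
The paper never proves this theorem: it is quoted as background, with the proof delegated to \cite{EMN}, \cite{El} and \cite{MN}, so there is no internal argument to compare yours against. Your outline in fact retraces the route of those references (Hensel splitting along the coprime factors of $\overline{f}$, the theorem of the polygon, the theorem of the residual polynomial, and the relation $\nu_p(\mathrm{disc}(f))=\nu_p(d_K)+2\nu_p(\mathrm{ind}(f))$), so you are reconstructing the cited proof rather than offering a different one.

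Two points in your sketch are genuinely incomplete. First, the index bookkeeping in part (1) is not ``local per factor'' in the way you state: for a factorization $f=\prod_u F_u$ in $\Z_p[x]$ one has $\nu_p(\mathrm{ind}(f))=\sum_u\nu_p(\mathrm{ind}(F_u))+\sum_{u<v}\nu_p(\mathrm{Res}(F_u,F_v))$. The cross resultants are $p$-adic units for factors with coprime reductions (this, not a bare assertion, is what justifies your reduction to a single $\ph$), but they are emphatically \emph{not} units for the factors $F_{ij}$, $F_{ij'}$ attached to different sides of the same $\ph_i$-polygon, nor for factors coming from different irreducible factors of one residual polynomial. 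Those cross terms account precisely for the lattice points in the rectangles between sides; the triangles under the individual sides alone do not add up to $\mathrm{ind}_{\ph_i}(f)$. So the step ``for each local factor the discrepancy between $\nu_p(\mathrm{disc}(F_{ij}))$ and its contribution to $\nu_p(d_K)$ equals twice the points under $S_{ij}$; summing recovers $\sum_i\mathrm{ind}_{\ph_i}(f)$'' undercounts as written, and repairing it is the real content of Ore's index theorem. Second, your closing claim that equality holds \emph{exactly} when $f$ is $p$-regular overshoots: the statement (and its use later in the paper via \cite[Theorem 4.18]{GMN}) only asserts that $p$-regularity is sufficient; equality can hold for non-regular $f$ whenever the second-order index $ind_2(f)$ vanishes (e.g.\ all second-order polygons one-sided of length or height $1$), so the ``only if'' direction is false in general and should be dropped rather than proved.
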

 
  In \cite{GMN}, Guàrdia, Montes, and  Nart introduced  the notion of $\phi$-admissible expansion used in order to treat some special cases when the $\phi$-expansion is not obvious. Let
\begin{equation}
\label{eq1}
f(x)=\displaystyle\sum_{i=0}^nA_i'(x)\phi(x)^i,\quad A_i'(x)\in \mathbb{Z}[x],
\end{equation}
be a $\phi$-expansion of $f(x)$, which does not necessarily  satisffy  deg$(A_i')<$deg$(\phi)$. By analogous to the definition of Newton polygon of $f$, for every $i=0,\dots, n$, let  $u_i'=\nu_p(A_i'(x))$, and  $N'$ be the lower boundary of the convex envelope of the set of points $\{(i,u_i')\,\mid\,0\leq i\leq n,\,u_i'\neq\infty\}$. To any $i=0,\dots,n$, we attach the residue coefficient as follows:
$$
c_i'=\left\{
\begin{array}{ll}
0,&\text{if }(i,u_i')\text{ lies above }N',\\
\left(\frac{A_i'(x)}{p^{u_i'}}\right)\md{p,\phi(x)},& \text{if }(i,u_i')\text{ lies on }N'.
\end{array}
\right.
$$
Likewise, for any side $S'$ of $N'$, we can define the residual polynomial associated to $S$ and denoted $f_{S'}(y)$ (similarely to the residual polynomial $f_{S}(y)$). We say that the $\phi$-expansion $(\ref{eq1})$ is admissible if $c_i'\neq 0$ for each abscissa $i$ of a vetex of $N'$ and it is obvious to see that if $\ol{\ph(x)}$ does not divide $\ol{(\frac{A_i'(x)}{p^{u_i'}})}$, then the $\phi$-expansion $(\ref{eq1})$ is admissible.  For more details, we refer to \cite{GMN}.
\begin{lem}
	If a $\phi$-expansion of $f(x)$ is admissible, then $N'=N_{\phi}^-(f)$ and $c_i'=c_i$. In particular, for any side $S$ of $N'$ we have $R_{\lambda}'(f)(y)=R_{\lambda}(f)(y)$ up to multiply by a nonzero coefficient of $\fph$.
\end{lem}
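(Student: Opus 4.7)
The plan is to reduce the admissible $\phi$-expansion to the canonical one via iterated Euclidean division by $\phi$, tracking both the Newton polygon and the residual coefficients through this reduction; admissibility turns out to be the exact condition that prevents any ``collapse'' in this process. First, for each $i$ write the canonical $\phi$-adic expansion $A_i'(x)=\sum_{j\ge 0}a_i^{(j)}(x)\phi(x)^j$ with $\deg a_i^{(j)}<\deg \phi$, obtained by iterated Euclidean division. Since the Gauss extension of $\nu_p$ is multiplicative on $\Q_p[x]$ and $\nu_p(\phi)=0$, one has $\nu_p(A_i')=\min_j\nu_p(a_i^{(j)})$, so $\nu_p(a_i^{(j)})\ge u_i'$ for all $i,j$. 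Substituting into $f=\sum_i A_i'\phi^i$ and collecting powers of $\phi$ yields the canonical $\phi$-expansion of $f$ with coefficients $a_k(x)=\sum_{i=0}^{k} a_i^{(k-i)}(x)$.

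Next, establish $N_\phi(f)\ge N'$ pointwise: each monomial $a_i^{(j)}\phi^{i+j}$ contributes a support point $(i+j,\nu_p(a_i^{(j)}))$ with $\nu_p(a_i^{(j)})\ge u_i'$, and by convexity of $N'$ through $(i,u_i')$ these shifted points sit on or above $N'$; taking the lower convex hull of all contributions gives the bound. For equality at a vertex $(i_0,u_{i_0}')$ of $N'$, the admissibility condition $c_{i_0}'\ne 0$ is equivalent, via $A_{i_0}'(x)=a_{i_0}^{(0)}(x)+\phi(x)\cdot(\ldots)$, to $\nu_p(a_{i_0}^{(0)})=u_{i_0}'$ and $\overline{a_{i_0}^{(0)}/p^{u_{i_0}'}}=c_{i_0}'$ in $\fph$. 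The off-diagonal contributions $a_i^{(i_0-i)}$ for $i<i_0$ satisfy $\nu_p\ge u_i'$, and strict convexity of $N'$ at the vertex forces these heights to exceed $u_{i_0}'$, so no cancellation against the leading term $a_{i_0}^{(0)}$ can occur; hence $\nu_p(a_{i_0})=u_{i_0}'$ and $c_{i_0}=c_{i_0}'$.

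Combining the inequality with equality at every vertex, the two lower convex envelopes coincide, so $N'=N_\phi(f)$. On each side $S$ of slope $\lambda=-h/e$, the only integer abscissae carrying nonzero residues are $s+je$ for $j=0,\dots,d$, and the same vertex-style argument at each such point yields $c_i=c_i'$, whence $R_\lambda(f)(y)=R_\lambda'(f)(y)$ up to a nonzero scalar in $\fph$ coming from the choice of representatives. The main obstacle is the slope estimate of the second paragraph: one must check rigorously that the shifted points $(i+j,\nu_p(a_i^{(j)}))$ cannot dip below $N'$, and that at each vertex the off-diagonal terms contribute at $p$-adic valuation strictly greater than $u_{i_0}'$; both rely crucially on convexity of $N'$ (strict at vertices) combined with the admissibility hypothesis.
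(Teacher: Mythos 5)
The paper itself does not prove this lemma: it is quoted from Gu\`ardia--Montes--Nart (\cite{GMN}), with the reader referred there for details. So your argument should be judged on its own, and it is in substance the standard proof: expand each $A_i'(x)$ $\phi$-adically, recombine to obtain the canonical $\phi$-expansion with $a_k=\sum_{i+j=k}a_i^{(j)}$, show the canonical polygon cannot lie below $N'$, and use admissibility at the vertices (where $c_{i_0}'\neq 0$ forces $\nu_p(a_{i_0}^{(0)})=u_{i_0}'$ with the correct residue, while the off-diagonal terms $a_i^{(i_0-i)}$, $i<i_0$, have strictly larger valuation) to get equality of the polygons and of the residual coefficients. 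This is correct in outline, and the identification $c_{i}'\equiv a_{i}^{(0)}/p^{u_i'}\md{(p,\phi)}$ is exactly the right way to exploit admissibility.

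Two precisions are needed where your justifications, as stated, are not quite the right reasons. First, the inequality $\nu_p(a_i^{(j)})\ge u_i'$ does not follow from ``multiplicativity'' of the Gauss valuation (that only gives $\nu_p(A_i')\ge\min_j\nu_p(a_i^{(j)})$); it follows because $\phi$ is monic, so Euclidean division of $A_i'/p^{u_i'}\in\Z_p[x]$ by $\phi$ keeps all quotients and remainders in $\Z_p[x]$. Second, and more importantly, ``convexity of $N'$'' alone does not prevent the shifted points $(i+j,\nu_p(a_i^{(j)}))$ from dipping below $N'$: a convex polygon may rise to the right, and a point moved to the right at height $\ge u_i'$ can then fall under it. What saves the argument is that the lemma concerns the principal parts (sides of negative slope): on that region $N'$ is non-increasing, so $N'(i+j)\le N'(i)\le\nu_p(a_i^{(j)})$, and at a lattice point $i_0$ of a side of negative slope the slopes to its left are strictly negative, giving $u_i'\ge N'(i)>N'(i_0)=u_{i_0}'$ for $i<i_0$ --- it is this strict negativity of the slopes, not ``strict convexity at the vertex,'' that rules out cancellation. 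With the statement read as the coincidence of the negative-slope parts and of the residual polynomials attached to them (which is how it is used in the paper), your reduction goes through; the same computation at non-vertex lattice points, where $c_{i_0}'$ may vanish, still yields $c_{i_0}=c_{i_0}'$, as you indicate.
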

Again, if  $f(x)$ is not $p$-regular, that is theorem of Ore fails, then in order to complete the calcul of index, the factorization of $f(x)$, and the absolute discriminant of $K$, Guardia, Montes, and Nart introduced the notion of high order Newton polygon. They showed,  thanks to a theorem of index, that  after a finite number of iterations this process yields all monic irreducible factors of $f(x)$, all prime ideals of $\Z_K$ lying above a prime integer $p$, the index $(\Z_K:\Z[\al])$, and the absolute discriminant of $K$. { We recall here some fundamental techniques of Newton polygon of high order. For more details, we refer to \cite{GMN}. As introduced in \cite{GMN},  a type of order $r - 1$ } is  a data ${\bf{t}}  = (g_1(x), -\la_1, g_2(x), -\la_2,\dots, g_{r-1}(x), -\la_{r-1},\psi_{r-1}(x))$,
		where every $g_{i}(x)$ is a monic polynomial in $\z_p[x]$, $\la_i\in \q^+$, and
		$\psi_{r-1}(y)$ is a polynomial over a  finite field of {$\displaystyle p^{H}$ and $H=\displaystyle\prod_{i=0}^{r-2}f_i$} elements,  with  $f_i=\mbox{deg}(\psi_{i}(x))$, satisfying the following recursive properties:
		\begin{enumerate}
			\item 
			$g_{1}(x)$ is irreducible modulo $p$,  $\psi_0(y) \in \F[y]$ ($\F_0=\F_p$) be the polynomial obtained
			by reduction of $g_{1}(x)$ modulo $p$, and  $\F_1 := \F_0[y]/(\psi_0(y)))$.
			\item
			For every $i=1,\dots,r-1$, the Newton polygon of $i^{th}$ order, $N_i(g_{i+1}(x))$,  has a single  sided of slope $-\la_i$.
			\item 
			For every $i=1,\dots,r-1$, the residual polynomial of $i^{th}$ order, $R_i(g_{i+1})(y)$ is an
			irreducible polynomial in $\F_i[y]$, $\psi_i(y)\in\F_i[y]$ be the monic polynomial
			determined by $R_i(\ph_{i+1})(y)\simeq \psi_i(y)$ (are equal up to multiplication by a nonzero element of $\F_i$, and  $\F_{i+1}= \F_i[y]/(\psi_i(y))$. Thus, $\F_0\subset \F_1\subset \dots\subset \F_r$ is a tower of finite fields.
			\item
			For every $i=1,\dots,r-1$, $g_{i+1}(x)$ has minimal degree among all monic polynomials
			in $\z_p[x]$ satisfying (2) and (3).
			\item 
			$\psi_{r-1}(y) \in\F_{r-1}[y]$ is a monic irreducible polynomial,  {$\psi_{r-1}(y)\neq y$}, and $\F_{r}= \F_{r-1}[Y]/(\psi_{r-1}(y))$.
		\end{enumerate}
		Here  the field
		$\F_i$ should not be confused with the finite field of $i$ elements.\\
		As for every $i=1,\dots,r-1$, the residual polynomial of the $i^{th}$ order, $R_i(g_{i+1})(y)$ is an
		irreducible polynomial in $\F_i[y]$, by  theorem of the product in order $i$, the polynomial $g_{i}(x)$ is irreducible in $\z_p[x]$. Let  $\om_0=[\nu_p,x,0]$  {be} the Gauss's extension of $\nu_p$ to $\q_p(x)$. As  for every $i=1,\dots,r-1$, the residual polynomial of the $i^{th}$ order, $R_i(g_{i+1})(y)$ is an
		irreducible polynomial in $\F_i[y]$, then according to MacLane notations  {and definitions} (\cite{Mc}),  $g_{i+1}(x)$ induces  a valuation  on $\q_p(x)$,  denoted by  $\om_{i+1}=[e_i\om_{i},g_{i+1},\la_{i+1}]$, where $\la_i=h_i/e_i$,   $e_i$ and  $h_i$ are  positive coprime integers. The valuation   $\om_{i+1}$ is called the augmented valuation of $\nu_p$  with respect to $\ph$ and $\la$ is  defined over   $\q_p[x]$  as follows:   
		$$
		\om_{i+1}(f(x))=\mbox{min}\{e_{i+1}\om_i(a_j^{i+1}(x))+jh_{i+1},\,j=0,\dots,n_{i+1}\},$$ 
		where $f(X)=\displaystyle\sum_{j=0}^{n_{i+1}}a_j^{i+1}(x)g_{i+1}^j(x)$ is the $g_{i+1}(x)$-expansion of $f(x)$. According to the terminology  in \cite{GMN}, the valuation  $\om_r$ is called the  $r^{th}$-order valuation associated to the data ${\bf{t}}$.
		For every order $r\ge 1$, the $g_r$-Newton polygon of $f(x)$, with respect  to  the valuation $\om_r$ is the lower boundary of the convex envelope of  the set of  points $\{(i,\mu_i), i=0,\dots, n_r\}$ in the  {Euclidean} plane, where $\mu_i=\om_r(a^r_i(X)g_r^i(x)))$.  
		{The following are the relevant theorems from Montes-Guardia-Nart's work (high order Newton polygon):
			\begin{thm}$($\cite[Theorem 3.1]{GMN}$)$\label{HNP}\\
				Let $f \in \z_{p}[x]$ be a monic polynomial such that $\overline{f(x)}$ is a positive power of
				$\overline{\ph}$. If  $N_r(f)=S_1+\dots+S_g$ has $g$ sides, then we can
				split $f(x)=f_1\times\dots\times f_g(x)$ in $\z_p[X]$, such that $N_r(f_i)=S_i$ and
				$R_r({f_i})(y)=R_r({f})(y)$ up to  multiplication by a nonzero
				element of $\F_r$ for every $i=1,\cdots,g$.
			\end{thm}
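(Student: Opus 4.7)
The plan is to generalize the classical (order-one) Newton polygon factorization theorem, which asserts that distinct slopes in $N_1(f)$ force a factorization of $f$ in $\z_p[x]$, to the higher-order setting governed by the MacLane augmented valuation $\om_r$ attached to the type ${\bf{t}}$. At first order, the argument runs via Hensel's lemma: distinct slopes produce coprime factors of a suitable approximation of $f$, which then lift uniquely. The higher-order version should be the direct analogue, with $\nu_p$ replaced by $\om_r$, $\ph$ replaced by $g_r$, and the residue field $\F_p$ replaced by $\F_r$.

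I would organize the argument in three steps. First, I would establish the additivity of $N_r$ under multiplication for polynomials whose reduction is a power of $\ol{\ph}$, namely that $N_r(hk)$ equals the Minkowski sum $N_r(h)+N_r(k)$; this is what forces any factorization of $f$ to respect the slope decomposition of $N_r(f)$. Second, I would associate to each slope $-\la_i$ the augmented valuation $\om_{r+1,i} = [e_i\om_r, g_r, \la_i]$; distinct slopes give rise to inequivalent valuations on $\Q_p(x)$, which is the separation mechanism playing the role of distinct roots in the classical Hensel-lemma argument. Third, I would carry out a Hensel-type iterative lifting: starting from a coarse factorization of $f$ modulo a sufficiently large power of the maximal ideal of the valuation ring of $\om_r$, I would successively refine to obtain $f_1,\dots,f_g \in \z_p[x]$ with $N_r(f_i)=S_i$. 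The identification $R_r(f_i)(y) = R_r(f)(y)$ up to a nonzero scalar of $\F_r$ then follows from the multiplicativity of residual polynomials, combined with the fact that $N_r(f_i)$ is a single side so its residual polynomial coincides with the piece of $R_r(f)$ attached to $S_i$.

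The main obstacle, as expected, is making the Hensel-style lifting work in the higher-order framework. At order one, coprimality of the residual factors over $\F_p$ is transparent and the standard Hensel lifting applies directly. At higher order, the appropriate coprimality must be phrased in terms of residual polynomials over $\F_r$ and of the augmented valuations $\om_{r+1,i}$, and convergence of the iterative procedure has to be measured with $\om_r$-values that mix ordinary $p$-adic valuations with data coming from the $g_r$-expansion. Ensuring that the successive approximations converge in $\z_p[x]$, rather than in some abstract completion with respect to $\om_r$, requires a careful bound on the $g_r$-adic precision at each step, together with the completeness of $\z_p$. Once the factorization exists, the remaining identities $N_r(f_i)=S_i$ and the residual compatibility are formal consequences of the additivity established in the first step.
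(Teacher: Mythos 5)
You should first note that the paper offers no proof of this statement at all: Theorem \ref{HNP} is quoted (as \cite[Theorem 3.1]{GMN}) from Gu\`ardia--Montes--Nart, so there is no in-paper argument to compare with; what you are really being asked to reproduce is the higher-order Montes machinery itself. Your statement of the conclusion is interpreted correctly (each $R_r(f_i)$ should match the residual polynomial of $f$ attached to the side $S_i$, up to a nonzero scalar of $\F_r$), but your text is a plan rather than a proof, and both of its pillars are left standing on nothing. Step 1, the Minkowski additivity $N_r(hk)=N_r(h)+N_r(k)$ together with multiplicativity of residual polynomials, is not something one ``establishes'' in passing: it is precisely the theorem of the product in order $r$ (Theorem 2.26 of \cite{GMN}), a substantial result whose proof requires the whole calculus of types, admissible expansions and the graded algebra of $\om_r$, and which moreover needs hypotheses (compatibility of both factors with the type ${\bf{t}}$, i.e.\ their reductions being powers of $\ol{\ph}$, one-sidedness in lower orders) that your sketch never tracks.

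Step 3 is where the argument actually fails as stated. You propose a Hensel-type iteration ``modulo a sufficiently large power of the maximal ideal of the valuation ring of $\om_r$,'' but $\om_r$ is a MacLane augmented valuation on $\Q_p(x)$, not a complete discrete valuation on a ring for which a Hensel lemma is available off the shelf; the notions of coprimality over $\F_r$ of the residual factors, of approximation, and of convergence of the successive corrections inside $\z_p[x]$ are exactly the things that need to be constructed, and you acknowledge the obstacle without supplying a mechanism. The proof in \cite{GMN} takes a different and simpler route at this point: since $\z_p$ is henselian, $f$ already factors into monic irreducible polynomials in $\z_p[x]$; for an irreducible factor, $N_r$ is one-sided (this uses the product theorem and the lower-order theory), so one simply defines $f_i$ as the product of those irreducible factors whose $r$-th order slope is $\la_i$, and then $N_r(f_i)=S_i$ and the residual identity follow from the product theorems for polygons and residual polynomials. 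In other words, the separation of slopes is achieved by grouping the $p$-adic irreducible factors, not by an $\om_r$-adic Newton--Hensel iteration; if you insist on your iterative construction you must first prove a genuine Hensel lemma in the graded algebra (or valuation ring) attached to $\om_r$, and nothing in your proposal does that. As it stands, the proposal has a real gap at its central step.
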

			\begin{thm}$($\cite[Theorem 3.7]{GMN}$)$\label{HNR}\\
				Let $f \in \z_{p}[X]$ be a monic polynomial such that  $N_r(f)=S$ has a single side of finite slope $-\la_r$.
				If $R_r({f})(y)=\displaystyle\prod_{i=1}^t\psi_i(y)^{a_i}$ is the factorization in $\F_r[y]$, then  $f(x)$ splits as $f(x)=f_1(x)\times\cdots\times f_t(x)$ in $\z_p[x]$ such that $N_r({f_i})=S$ has  a single side of slope $-\la_r$ and $R_r({f_i})(y)=\psi_i(y)^{a_i}$  up to  multiplication by a nonzero
				element of $\F_r$ for every $i=1,\cdots,t$.
			\end{thm}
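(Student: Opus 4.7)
The plan is to establish this as a Hensel-type lifting result within the framework of MacLane's augmented valuations. The residual polynomial $R_r(f)(y) \in \F_r[y]$ is constructed from the coefficients of the $g_r$-expansion of $f$ whose associated points lie on the single side $S$ of slope $-\la_r$. Since the $\psi_i(y)$ are pairwise distinct irreducible factors, the factorization $R_r(f)(y)=\prod_{i=1}^t\psi_i(y)^{a_i}$ decomposes as a coprime product: for each $i$, the factors $\psi_i^{a_i}$ and $\prod_{j\neq i}\psi_j^{a_j}$ are relatively prime in $\F_r[y]$. The goal is to lift this coprime factorization in the residue ring all the way back to a genuine factorization of $f$ in $\z_p[x]$.

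The key steps, carried out by induction on $t$, are as follows. First, expand $f(x)=\sum_{j=0}^{n_r}a^r_j(x)g_r(x)^j$ and identify the integer points on $S$ at abscissas $0,e_r,2e_r,\ldots,n_r$, which determine $R_r(f)(y)$ of degree $d=n_r/e_r=\sum_i a_i\deg\psi_i$. Second, split $R_r(f)=U_1\cdot V_1$ with $U_1=\psi_1^{a_1}$ and $V_1=\prod_{j\ge 2}\psi_j^{a_j}$, both lying in $\F_r[y]$ and coprime there. Third, apply the order-$r$ Hensel lemma (a consequence of the theorem of the product, Theorem \ref{HNP}, restricted to our single-side setting) to lift this residue-ring factorization to a true factorization $f=f_1\cdot \tilde f$ in $\z_p[x]$ with $R_r(f_1)\simeq U_1$ and $R_r(\tilde f)\simeq V_1$. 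Fourth, iterate the argument on $\tilde f$ to extract $f_2,\ldots,f_t$. Multiplicativity of Newton polygons (Minkowski additivity $N_r(f)=N_r(f_1)+N_r(\tilde f)$ coupled with the single slope $-\la_r$) then forces each factor polygon to be one-sided of slope $-\la_r$, with lengths matching the degrees of the corresponding residual factors times $e_r$; this yields $N_r(f_i)=S$ together with the claimed residual identity up to a unit in $\F_r$.

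The main obstacle is the high-order Hensel lifting itself. In order $r=1$ this reduces to the familiar Hensel lemma in $\z_p[x]/(\ph)$, driven directly by coprimality modulo $p$ and $\ph$. For general $r$, however, one must work with the augmented valuation $\om_r$ on $\q_p(x)$, its completion, and the residue field $\F_r$, carefully tracking the $g_r$-expansion through successive refinements and ensuring that the approximations improve monotonically with respect to $\om_r$. The rigorous justification is developed in Guardia--Montes--Nart through the type-theoretic machinery and the product theorem; rather than reconstruct it from scratch, the clean strategy is to invoke Theorem \ref{HNP} as the engine and merely verify that its hypotheses are met here: a single-sided polygon $N_r(f)=S$ combined with a nontrivial coprime factorization of $R_r(f)$, which is exactly the data given.
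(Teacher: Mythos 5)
This statement is not proved in the paper at all: it is quoted verbatim as a preliminary, with the proof delegated to Guardia--Montes--Nart \cite[Theorem 3.7]{GMN}. So there is no internal proof to compare against; the only question is whether your sketch would stand on its own, and as written it does not. The crucial step in your outline --- lifting the coprime splitting $R_r(f)=\psi_1^{a_1}\cdot\prod_{j\ge 2}\psi_j^{a_j}$ to a factorization $f=f_1\cdot\tilde f$ in $\z_p[x]$ --- is attributed to ``the theorem of the product, Theorem \ref{HNP}, restricted to our single-side setting.'' But Theorem \ref{HNP} splits $f$ according to the \emph{sides} of $N_r(f)$, and in the situation of Theorem \ref{HNR} there is by hypothesis a single side, so Theorem \ref{HNP} yields only the trivial factorization $g=1$ and gives no leverage whatsoever for separating the coprime factors of $R_r(f)$. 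The content of Theorem \ref{HNR} is precisely the extra, Hensel-type statement that \emph{within} one side the factorization of the residual polynomial lifts; it is a companion result to Theorem \ref{HNP} in \cite{GMN}, proved there through the order-$r$ machinery (types, the multiplicativity of the residual operator, and an order-$r$ analogue of Hensel's lemma), not a corollary of the product theorem. Your final paragraph in effect concedes this and falls back on ``invoke \cite{GMN},'' which makes the argument circular: you are citing the theorem (or its immediate source) in order to prove it.

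Two further points. First, the iteration-on-$t$ scheme and the additivity $N_r(f)=N_r(f_1)+N_r(\tilde f)$ are fine in spirit, but the additivity of $r$-th order polygons and the multiplicativity $R_r(f_1\tilde f)\simeq R_r(f_1)R_r(\tilde f)$ are themselves nontrivial facts about the valuation $\om_r$ that must be taken from \cite{GMN}; they are not automatic as they are in order $1$. Second, your observation that each $N_r(f_i)$ is one-sided of slope $-\la_r$ with length $e_r a_i\deg\psi_i$ is correct, and in fact it shows that the statement as transcribed in the paper (``$N_r(f_i)=S$'') is imprecise unless $t=1$: each factor inherits the slope of $S$, not the whole side. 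If you want a self-contained proof, the honest route is to reproduce the order-$r$ Hensel argument of \cite{GMN} (or restrict to the order-$1$ case, where your coprimality-modulo-$(p,\ph)$ lifting does work); merely pointing at Theorem \ref{HNP} does not close the gap.
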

			In \cite[Definition 4. 15]{GMN}, the authors introduced the notion of $r^{th}$-order index of a monic polynomial $f\in \Z[x]$ as follows: For a fixed data $${\bf{t}}  = (g_1(x), -\la_1, g_2(x), -\la_2,\dots, g_{r-1}(x), -\la_{r-1},\psi_{r-1}(x)),$$ let $N_r(f)$ be the Newton polygon of $r^{th}$-order with respect to the data  {\bf{t}} and   $ind_t(f)=f_0\cdots f_{r-1}ind(N_r(f))$, where  and $ind(N_r(f))$ is the index of the polygon $N_r(f)$; the number of points with natural integer coordinates that lie below or on the polygon $\npp{f}$, strictly above the horizontal line of equation $y=\om_r(f)$,{ and strictly beyond the vertical axis}. The $r^{th}$-order index of $f$ is defined by 			
			$$ind_r(f)=\displaystyle\sum_{t\in {\bf{t}}}ind_t(f).$$
			In particular, if $\ol{f(x)}$ is a power of $\ol{\ph(x)}$, then $ind_1(f)=ind_\ph(f)$.
			In \cite[Theorem 4. 18]{GMN}, they showed the following index formula which generalizes the theorem of index of Ore:
$$ ind(f)\ge  ind_1(f) +\dots + ind_r(f),$$ and 
the equality holds if and only if $ind_{r+1}(f) = 0$. Recall that by definition $ind(N_{r+1}(f))=0$ if and only if $N_{r+1}(f)$ has a single side of length $1$ or height $1$. By  \cite[Lemma 2. 17]{GMN} $(2)$, if $R_r(f)$ is square free, then the length of $N_r(f)$
 is $1$. Thus if $R_r(f)$ is square free, then $ind_{r+1}(f)=0$, and so the equality  $ ind(f)\ge  ind_1(f) +\dots + ind_r(f)$ holds. 	
\section{Some improvements and new results}\label{fix}
Throught this section unless otherwise noted $f(x)=x^n-m\in \Z[x]$ is an irreducible polynomial such that  $n\ge 2$ and $\nu_p(m)<n$ for every prime integer $p$. Let $K=\Q(\al)$ be the number field generated by a complex root  $\al$ of $f(x)$.\\
\smallskip

In order to fix the error in the remark of \cite[Theorem 1.1]{G}, we show  the following lemma:
\begin{lem}\label{2^k}
Let $k$ be a non-negative integer and $f(x)=x^{2^k}+1\in \Z[x]$. Then $f(x)$ is irreducible over $\Q$. 
\end{lem}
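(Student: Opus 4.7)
The plan is to reduce the irreducibility of $f(x)=x^{2^k}+1$ to an application of Eisenstein's criterion at the prime $2$, after the standard substitution $x \mapsto x+1$. The case $k=0$ is trivial since $f(x)=x+1$ is linear, so throughout I would assume $k\ge 1$.

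First I would observe that $f(x)$ is irreducible over $\Q$ if and only if $g(x):=f(x+1)=(x+1)^{2^k}+1$ is irreducible over $\Q$, since $x \mapsto x+1$ is a $\Q$-algebra automorphism of $\Q[x]$. Then I would expand via the binomial theorem to get
\[
g(x) \;=\; x^{2^k} \;+\; \sum_{i=1}^{2^k-1}\binom{2^k}{i}x^{i} \;+\; 2.
\]
The leading coefficient is $1$, the constant term equals $2$, and I want to check the hypotheses of Eisenstein's criterion at $p=2$: the constant term is divisible by $2$ but not by $4$, which is immediate, and every intermediate coefficient $\binom{2^k}{i}$ with $1 \le i \le 2^k-1$ must be divisible by $2$.

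The heart of the argument, therefore, is the classical divisibility fact that $2$ divides $\binom{2^k}{i}$ for $1\le i\le 2^k-1$. I would give this either by Kummer's theorem (the $2$-adic valuation of $\binom{2^k}{i}$ equals the number of carries when adding $i$ and $2^k-i$ in base $2$, which is at least one since $i + (2^k-i) = 2^k$ forces a carry out of the top bit whenever $0<i<2^k$) or by a quick induction on $k$ using Pascal's rule. With this fact in hand, Eisenstein's criterion at $p=2$ applies to $g(x)$, so $g(x)$ is irreducible in $\Q[x]$, and consequently so is $f(x)$.

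The only real obstacle is the binomial divisibility step; once that is in hand, the rest is a mechanical application of Eisenstein. No deeper machinery (cyclotomic theory, Newton polygons, or the results recalled in the preliminaries) is needed here, which is consistent with the note's stated goal of exhibiting an elementary example showing that the Eisenstein hypothesis in the remark following \cite[Theorem 1.1]{G} is not necessary for irreducibility.
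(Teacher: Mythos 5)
Your proof is correct and follows essentially the same route as the paper: shift $x\mapsto x+1$ and apply Eisenstein's criterion at $p=2$, the paper merely phrasing the same computation through the $(x-1)$-expansion and its Newton polygon before asserting that $f(x+1)$ is $2$-Eisenstein. In fact you supply the justification of $2\mid\binom{2^k}{i}$ for $0<i<2^k$ that the paper leaves implicit.
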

\begin{proof}
Let $\ph=x-1$. Then $\ol{f(x)}=\ol{\ph(x)}^{2^k}$ in $\F_2[x]$ and  $f(x)=\displaystyle\sum_{j=1}^{2^k}\binom{2^k}{j}\phi^j(x)+2$  is the $\ph$-expansion of $f(x)$. Then  $\nph{f}=S$ has a single side of height $1$, with respect to the valuation $\nu_2$. Let $g(x)=f(x+1)$. Then  $g(x)$ is $2$-Eisenstein.  Thus $g(x)$ is irreducible over $\Q$, and so $f(x)$ is irreducible.
\end{proof}
Remark that as Dedekind's criterion characterizes the integral closedness of $\Z[\al]$,  the converse of \cite[Theorem 1.1]{G} holds. So,  the following is an improvement on it:
\begin{thm}\label{intclos}
$\Z[\al]$ is the ring of integers of $K$ if and only if  $\nu_p(m^p- m)=1$ for every prime integer $p$ dividing $n\cdot m$.
\end{thm}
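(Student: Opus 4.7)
The plan is to invoke Dedekind's criterion (Theorem~\ref{Ded}) prime by prime; the claim reduces to showing that for every rational prime $p$, $p$ fails to divide $(\Z_K:\Z[\al])$ if and only if either $p\nmid nm$ or $\nu_p(m^p-m)=1$. First, if $p\nmid nm$ then $\overline{f}(x)=x^n-\overline{m}$ is separable over $\F_p$ (its derivative $nx^{n-1}$ shares no root with $\overline{f}$ as $\overline m\neq 0$), so every $l_i=1$ and Dedekind's criterion yields $p\nmid(\Z_K:\Z[\al])$ unconditionally. Thus only primes dividing $nm$ need attention.

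For $p\mid m$, we have $\overline{f}(x)=x^n$, so there is a single factor $\overline{\ph_1}=x$ of multiplicity $n\ge 2$, and the auxiliary polynomial of Dedekind's criterion is the constant $M(x)=-m/p$. Hence $\overline{\ph_1}\nmid\overline{M}$ if and only if $\nu_p(m)=1$. Since $p\mid m$ gives $m^{p-1}-1\equiv -1\md{p}$, one has
\begin{equation*}
\nu_p(m^p-m)=\nu_p(m)+\nu_p(m^{p-1}-1)=\nu_p(m),
\end{equation*}
so the condition is equivalent to $\nu_p(m^p-m)=1$.

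The harder case is $p\mid n$ and $p\nmid m$. Write $n=p^k n'$ with $p\nmid n'$ and $k\ge 1$. Since $\overline m\in\F_p^\times$ satisfies $\overline m^{p^k}=\overline m$, we get $\overline{f}(x)=(x^{n'}-\overline m)^{p^k}$ in $\F_p[x]$; the polynomial $x^{n'}-\overline m$ is separable, so factor it as $\prod_{i=1}^r\overline{\ph_i}$ into distinct monic irreducibles and lift each $\overline{\ph_i}$ to $\ph_i\in\Z[x]$. Then $\overline{f}=\prod_i\overline{\ph_i}^{p^k}$ with each $l_i=p^k\ge 2$. To evaluate $\overline{M}$ modulo $\overline{\ph_i}$ for $M=(f-\prod_j\ph_j^{p^k})/p$, I would pick a root $\beta$ of $\overline{\ph_i}$ in $\overline{\F_p}$ and a lift $\widetilde\beta\in\overline{\Z_p}$; then $\widetilde\beta^{n'}=m+p\gamma$ for some $\gamma\in\overline{\Z_p}$. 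A short binomial check gives $(m+p\gamma)^{p^k}\equiv m^{p^k}\md{p^2}$, since for every $j\ge 1$ the term $\binom{p^k}{j}m^{p^k-j}(p\gamma)^j$ has $p$-adic valuation at least $k-\nu_p(j)+j\ge k+1\ge 2$; hence $f(\widetilde\beta)\equiv m^{p^k}-m\md{p^2}$. On the other hand $\ph_i(\widetilde\beta)\equiv 0\md{p}$ forces $\prod_j\ph_j(\widetilde\beta)^{p^k}$ to have valuation at least $p^k\ge 2$. Dividing by $p$ and reducing yields
\begin{equation*}
\overline{M}(\beta)\equiv\frac{m^{p^k}-m}{p}\md{p},
\end{equation*}
a constant in $\F_p$ independent of $i$ and $\beta$, so Dedekind's criterion holds iff this constant is nonzero, iff $\nu_p(m^{p^k}-m)=1$.

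To conclude, one replaces $m^{p^k}$ by $m^p$ modulo $p^2$: iterating the identity $(m+pc)^p\equiv m^p\md{p^2}$ (all cross-terms carry at least $p^2$) gives $m^{p^k}\equiv m^p\md{p^2}$, whence $\nu_p(m^{p^k}-m)\ge 2$ iff $\nu_p(m^p-m)\ge 2$; combined with the Fermat bound $\nu_p(m^p-m)\ge 1$ (valid since $p\nmid m$), Dedekind's test in the third case becomes exactly $\nu_p(m^p-m)=1$. Combining the three cases proves the theorem. The main obstacle is case three; the key insight is that evaluating $\overline{M}$ at a lifted root collapses the test to a single scalar in $\F_p$, independent of the chosen factor $\overline{\ph_i}$, because the contributions of the $\ph_i$ themselves vanish modulo $p^2$ once raised to the $p^k$-th power, and this scalar collapses further to $(m^p-m)/p$ via the Fermat-style congruence.
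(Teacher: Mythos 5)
Your proof is correct and takes the same route the paper relies on: the paper deduces Theorem \ref{intclos} from Dedekind's criterion (Theorem \ref{Ded}), exactly the prime-by-prime computation you carry out in full for $p\nmid nm$, $p\mid m$, and $p\mid n$ with $p\nmid m$, including the reduction of $m^{p^k}$ to $m^p$ modulo $p^2$. One small touch-up: choose the lift $\widetilde\beta$ inside the ring of integers of the unramified extension of $\Q_p$ with residue field $\F_p(\beta)$ (for instance the Hensel lift of $\beta$ as a root of $x^{n'}-m$), since for an arbitrary lift in $\overline{\Z}_p$ the element $\widetilde\beta^{\,n'}-m$ is only guaranteed to lie in the maximal ideal, not in $p\,\overline{\Z}_p$, so the congruences modulo $p$ and $p^2$ should be read in that unramified subring.
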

Notice also,  as it is showen by Corollary \ref{ns}, the statement in Theorem \ref{intclos}  does not characterize the monogeneity of $K$.
\begin{cor}\label{ns}
Let $f(x)=x^n-a^v\in \Z[x]$ with $a\neq \mp 1$  a square free integer and $v$  a positive integer which is coprime to $n$.
If for every prime integer $p$ dividing $n$, $\nu_p(a^{p-1}-1)=1$, then $K$ is monogenic.
\end{cor}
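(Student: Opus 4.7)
The plan is to exhibit an integral primitive element $\theta$ of $K$ which is a root of the simpler polynomial $g(x)=x^n-a$, and then apply Theorem \ref{intclos} to $g(x)$. The point is that $\gcd(v,n)=1$ yields integers $u\geq 1$ and $s\geq 0$ with $uv-sn=1$ (take $u$ to be the inverse of $v$ modulo $n$ chosen in $\{1,\dots,n\}$, and $s=(uv-1)/n$). Setting $\theta=\alpha^u a^{-s}\in K$, a direct computation gives
\[
\theta^n=\alpha^{un}a^{-sn}=(a^v)^u a^{-sn}=a^{uv-sn}=a,
\]
and similarly $\theta^v=\alpha\cdot\alpha^{sn}a^{-sv}=\alpha\cdot(a^v)^s a^{-sv}=\alpha$. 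Thus $\theta$ is a root of the monic integer polynomial $g(x)=x^n-a$, so $\theta\in\Z_K$, and $\alpha=\theta^v\in\Q(\theta)$ forces $\Q(\theta)=\Q(\alpha)=K$; since $[K:\Q]=n$, $g$ must be the minimal polynomial of $\theta$, and in particular it is irreducible.

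With $\theta$ in hand, the problem reduces to the $p$-adic criterion of Theorem \ref{intclos} applied to $g(x)$: one must verify $\nu_p(a^p-a)=1$ for every prime $p$ dividing $n\cdot a$. (The standing conditions $n\geq 2$ and $\nu_p(a)<n$ are clear from the square-freeness of $a$.) For $p\mid n$, the hypothesis $\nu_p(a^{p-1}-1)=1$ forces $a^{p-1}\equiv 1\pmod p$ and hence $p\nmid a$, giving
\[
\nu_p(a^p-a)=\nu_p(a)+\nu_p(a^{p-1}-1)=0+1=1.
\]
For $p\mid a$ with $p\nmid n$, square-freeness of $a$ yields $\nu_p(a)=1$, while $p\mid a$ implies $\nu_p(a^{p-1}-1)=0$, so again $\nu_p(a^p-a)=1$. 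Since the first case also forces $\gcd(n,a)=1$, these two cases are exhaustive, and Theorem \ref{intclos} gives $\Z_K=\Z[\theta]$, i.e.\ $K$ is monogenic.

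The conceptual step, and the only one that is not routine arithmetic, is the change of generator from $\alpha$ to $\theta$: the original polynomial $x^n-a^v$ fails the hypothesis of Theorem \ref{intclos} at primes dividing $a$ whenever $v>1$ because $a^v$ is not square-free there. The coprimality $\gcd(v,n)=1$ is precisely what allows one to replace $\alpha$ by an $n$-th root of $a$ itself; once this substitution is made, the remaining $p$-adic bookkeeping is immediate.
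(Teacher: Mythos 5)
Your proof is correct and follows essentially the same route as the paper: use $\gcd(v,n)=1$ to replace $\alpha$ by $\theta=\alpha^{u}a^{-s}$ with $\theta^{n}=a$, and then apply Theorem \ref{intclos} to $g(x)=x^{n}-a$. The only cosmetic differences are that you establish that $g$ is the minimal polynomial via $\theta^{v}=\alpha$ and a degree count where the paper invokes the Eisenstein criterion, and you spell out the verification $\nu_p(a^{p}-a)=1$ for every prime $p\mid n\cdot a$, which the paper leaves implicit.
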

\begin{proof}
Since gcd$(v,n)=1$, let $i,j$ be two non-negative integers satisfying $vi-nj=1$ with $i<n$. Let also $\th=\frac{\al^i}{a^j}$. Then $\th\in K$ and $\th^n=\frac{a^{vi}}{a^{nj}}=a$. Since $a\neq \mp 1$ is a square free integer, $g(x)=x^n-a$ is an Eisenstein polynomail, and so is the  minimal polynomial of $\th$ over $\Q$.  Thus $\th$ is a primitive element of  $K$, which is  a root of $g(x)=x^n-a$ with $a\neq \pm 1$ is a square free integer. By applying Theorem \ref{intclos}, $\Z_K=\Z[\th]$.
\end{proof} 
\smallskip

Notice that \cite[Proposition 3.4, 3.5]{G} can be ajusted as follows: 
\begin{prop}\label{NP}
Let $f(x)=x^n-m\in \Z[x]$ be an irreducible polynomial and $p$ a prime integer which divides $n$ and does not divide $m$. Let $n=p^rt$ in $\Z$ with $p$ does not divide $t$. Then $\ol{f(x)}=\ol{(x^t-m)}^{p^r}$. Let $v=\nu_p(m^p-m)$ and $\ph\in \Z[x]$ be a monic polynomial, whose reduction modulo $p$ divides $\ol{f(x)}$. 
 Let us denote $(x^t-m)=\ph(x)Q(x)+R(x)$. Then $\nu_p(R)\ge1$. 
 \begin{enumerate}
 \item
 If $\nu_p(m^p-m)\le r$ or $\ph(x)$ divides $x^t-m$ in $\Z[x]$, then $\npp{f}$ is the lower boundary of the  convex envelope of the set of the points $\{(0,v)\}\cup \{(p^j,r-j), \, j=0,\dots,r\}$.
 \item
 If $\nu_p(m^p-m)\ge r+1$, then $\npp{f}$ is the lower boundary of the  convex envelope of the set of the poiints $\{(0,V)\}\cup \{(p^j,r-j), \, j=0,\dots,r\}$ and $\nu_p(ind(f))=\mbox{deg}(\ph)\times \displaystyle\sum_{j=1}^{\mbox{min}(r,\nu(m^p-m)-1)}p^j$ for some integer $V\ge r+1$.
 \item
  $\nu_p(ind(f))=\mbox{deg}(\ph)\times \displaystyle\sum_{j=1}^{\mbox{min}(r,\nu(m^p-m)-1)}p^j$.
 \end{enumerate}
\end{prop}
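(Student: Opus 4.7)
The plan is to read the $\phi$-Newton polygon of $f$ off an explicit admissible $\phi$-expansion obtained by raising $x^t=\phi Q+(R+m)$ to the $p^r$-th power. The binomial theorem gives
\[
f(x) = x^n - m = \bigl((R+m)^{p^r} - m\bigr) + \sum_{j=1}^{p^r} \binom{p^r}{j}\, Q^j (R+m)^{p^r-j}\,\phi(x)^j,
\]
so that $A_0=(R+m)^{p^r}-m$ and $A_j=\binom{p^r}{j}Q^j(R+m)^{p^r-j}$ for $j\ge 1$. Because $\gcd(t,p)=1$ and $p\nmid m$, the polynomial $\overline{x^t-m}$ is separable in $\F_p[x]$, so $\overline{\phi}$ is coprime to $\overline{Q}$ and to $\overline{m}$. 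Together with $\nu_p(R)\ge 1$ this forces $\nu_p(Q)=\nu_p(R+m)=0$, hence $\nu_p(A_j)=\nu_p\!\bigl(\binom{p^r}{j}\bigr)=r-\nu_p(j)$ for $1\le j\le p^r$; moreover at each candidate vertex $(p^k,r-k)$ the residue coefficient $A_{p^k}/p^{r-k}\pmod{p,\phi}$ is a nonzero element of $\fph$, so the expansion is admissible there.

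To compute $\nu_p(A_0)$ I would split $A_0 = \bigl((R+m)^{p^r}-m^{p^r}\bigr) + \bigl(m^{p^r}-m\bigr)$ and bound the two summands separately. Expanding the first bracket binomially, every term $\binom{p^r}{k}R^k m^{p^r-k}$ with $k\ge 1$ has $\nu_p\ge (r-\nu_p(k))+k\nu_p(R)\ge r+1$, so the whole bracket has $\nu_p\ge r+1$ (and vanishes identically when $R=0$). A lifting-the-exponent computation applied to $a=m^{p-1}\equiv 1\pmod p$ and $N=(p^r-1)/(p-1)$, with separate subcases for $p$ odd and $p=2$, yields $\nu_p(m^{p^r}-m)=v$. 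Combining these estimates: if $\phi(x)$ divides $x^t-m$ in $\Z[x]$ (so $R=0$) or if $v\le r$, then $\nu_p(A_0)=v$; if $v\ge r+1$, then $\nu_p(A_0)=V$ for some $V\ge r+1$. This explains the dichotomy between (1) and (2).

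With these valuations in hand, the principal $\phi$-Newton polygon is the lower convex hull of $(0,\nu_p(A_0))$ together with the points $(j,r-\nu_p(j))$ for $1\le j\le p^r$. A direct convexity check, using $\nu_p(j)\le \lfloor\log_p j\rfloor$, shows that the only candidate vertices among the latter are the staircase points $(p^k,r-k)$, $k=0,\dots,r$; adjoining the point at $x=0$ then produces exactly the convex envelopes claimed in (1) and (2). Since every side of this polygon has height $1$, its associated residual polynomial has degree $1$ and is automatically square-free, so $f$ is $p$-regular. Theorem~\ref{ore} then gives $\nu_p(\mathrm{ind}(f))=\deg(\phi)\cdot\mathrm{ind}_\phi(f)$, and a layer-by-layer lattice-point count of the polygon yields the closed-form sum in (3). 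The main obstacle is Step~2: the LTE argument branches at $p=2$ versus $p$ odd, and one must check that the tail $(R+m)^{p^r}-m^{p^r}$ cannot drop the valuation of $A_0$ strictly below $v$ when $v\le r$.
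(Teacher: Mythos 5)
Your route is essentially the paper's (same binomial expansion of $\bigl(\ph Q+(R+m)\bigr)^{p^r}-m$, same valuation estimates, admissibility, then Ore), but there is a genuine gap at the $p$-regularity step: the claim that \emph{every side of the polygon has height $1$} is false, and item (3) depends on it. For $p$ odd it is true (among the negative slopes from $(0,v)$ to the staircase, the quantity $i/p^{i}$ has a unique maximizer $i=1$, so the leftmost side has height $1$), but for $p=2$ one has $1/2=2/4$, so the leftmost side can pass through two staircase points and have height $2$. Concretely, take $f(x)=x^4-5$, $p=2$, $\ph=x-1$: here $r=2$, $v=\nu_2(5^2-5)=2$, and $\npp{f}$ is the single side from $(0,2)$ to $(4,0)$, of height $2$ and degree $2$. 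Its residual polynomial is a quadratic, so square-freeness is not ``automatic from degree $1$''; you need the supplementary argument the paper gives for exactly this case (the residual polynomial is $y^2+y+1$, more generally a quadratic over $\fph$ with nonzero middle coefficient, hence separable in characteristic $2$). Without that, the appeal to Theorem \ref{ore} with equality, i.e.\ the conclusion $\nu_p(ind(f))=\mbox{deg}(\ph)\,ind_\ph(f)$, is unjustified precisely when $p=2$ and $2\le v\le r$.

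Two further points. First, admissibility: you only check nonzero residual coefficients at the abscissas $p^k$, but $(0,\nu_p(A_0))$ is also a vertex of $N'$ and your $A_0$ is not reduced modulo $\ph$; if the residual coefficient there vanished, the true left vertex could sit higher. Your own estimate $\nu_p\bigl((R+m)^{p^r}-m^{p^r}\bigr)\ge r+1$ in fact closes this: when $v\le r$ it gives $A_0/p^{v}\equiv (m^{p^r}-m)/p^{v}\not\equiv 0 \md{(p,\ph)}$, and when $v\ge r+1$ Euclidean division by the monic $\ph$ cannot lower Gauss valuations, so the reduced constant term still has valuation $\ge r+1$ (this is what the paper does explicitly via the expansion $(R+m)^{p^r}-m^{p^r}=\sum_k r_k\ph^k$ with $\nu_p(r_k)\ge r+1$); you should state this rather than leave it implicit. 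Second, the ``layer-by-layer lattice-point count'' should actually be carried out: at height $y$ there are $p^{r-y}$ admissible lattice points, giving $\mbox{deg}(\ph)\sum_{j=1}^{\min(r,v-1)}p^{r-j}$, which is what the paper's proof derives; the exponent $p^{j}$ printed in items (2)--(3) is a misprint, so asserting that the count ``yields the closed-form sum in (3)'' papers over a real discrepancy. On the positive side, your lifting-the-exponent verification that $\nu_p(m^{p^r}-m)=v$ is correct and is a point the paper itself glosses over.
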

  \begin{proof}
Let $f(x)=(x^t-m+m)^{p^r}-m=(\ph(x)Q(x)+R(x)+m)^{p^r}-m=\displaystyle\sum_{j=1}^{p^r}\binom{p^r}{j}(R(x)+m)^{{p^r}-j}Q^j\phi^j(x)+(R(x)+m)^{p^r}-m$. Since $x^t-m$ is separable over $\F_p$, $\ol{\ph}$ does not divide $\ol{Q(x)}$. Thus if $R=0$, this $\ph$-expansion is admissible and so $\npp{f}$ is the lower boundary of the  convex envelope of the set of the points $\{(0,v)\}\cup \{(r,r-j), \, j=0,\dots,r-1\}$. If $R\neq 0$, then let
$(R(x)+m)^{p^r}-m^{p^r}=\displaystyle\sum_{k\ge 0}r_j\phi^j(x)$ be the $\ph$-expansion of $(R(x)+m)^{p^r}-m^{p^r}$.  Then
$f(x)=\dots+\displaystyle\sum_{j=1}^{p^r}\binom{p^r}{j}((R(x)+m)^{{p^r}-j}Q^j+r_j)\phi^j(x)+(r_0+m^{p^r}-m)$ is the $\ph$-expansion of $f(x)$. 
Since $\nu_p(R)\ge 1$, $(R(x)+m)^{p^r}-m^{p^r}=\displaystyle\sum_{j=1}^{p^r}\binom{p^r}{j}m^{{p^r}-j}R(x)^j+m^p$, and 
 $\nu_p(\binom{p^r}{j}m^{{p^r}-j}R(x)^j)\ge r-\nu_p(j)+j\ge  r+1$, we get  $\nu_p((R(x)+m)^{p^r}-m^{p^r})\ge r+1$, and so $\nu_p(r_k)\ge r+1$ for every $k\ge 0$.  As for every $j=0,\dots, p^r$, $\nu_p(\binom{p^r}{j})\le r$,  $\nu_p(\binom{p^r}{j}((R(x)+m)^{{p^r}-j}Q^j+r_j))= \nu_p(\binom{p^r}{j})$ for every $j=1,\dots, p^r$. It follows that if $\nu_p(m^p-m)\le r<r+1$, then $\npp{f}$ is the lower boundary of the  convex envelope of the set of the points $\{(0,\nu_p(m^p-m))\}\cup \{(r,r-j), \, j=0,\dots,r-1\}$. But if  $\nu_p(m^p-m)\ge r+1$, then $\npp{f}$ is the lower boundary of the  convex envelope of the set of the points $\{(0,V)\}\cup \{(r,r-j), \, j=0,\dots,r-1\}$, where $V=\nu_p(m^p-m+r_{0})\ge \mbox{min}(\nu_p(m^p-m),\nu_p(r_0))\ge r+1$.\\  
 Let $y=1,\dots, \mbox{min}(\nu_p(m^p-m)-1,r)$. As the number of points with positive coordinates $(i,y)$ lying below the polygon $\npp{f}$ is  $p^{r-y}$, then  $ind(\npp{f})= \displaystyle\sum_{y=1}^{\mbox{min}(r,\nu(m^p-m)-1)}p^y$ and  $ind_\ph(f)=\mbox{deg}(\ph)\times \displaystyle\sum_{j=1}^{\mbox{min}(r,\nu(m^p-m)-1)}p^{r-j}$. Since $\npp{f}$ is the join of sides  of degree $1$, except  for $p=2$, my be the first one is of degree $2$, with associated residual polynomial of $f(x)$ is $y^2+y+1$, which  is irreducible over $\F_2=\fph$, the polynomial $f(x)$ is $p$-regular and thus $\nu_p(ind(f))=ind_\ph(f)=\mbox{deg}(\ph)\times \displaystyle\sum_{j=1}^{\mbox{min}(r,\nu(m^p-m)-1)}p^{r-j}$ as desired.
\end{proof}
 An interesting question is "under which weaker conditions on $n,m$, and $p$ such that $p$ divides $m$, we can keep the equality $ \nu_p(indf)) = \frac{(n-1)(v-1) + d-1}{2} $?"
The answer is given by the following proposition:
\begin{prop}
If gcd$(p,\nu_p(m), n)=p$ for some prime integer $p$, then $\nu_p(ind(f))>\frac{(n-1)(\nu_p(m)-1)+d-1}{2}$.
\end{prop}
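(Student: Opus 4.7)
The plan is to show that under the hypothesis $f(x)$ is not $p$-regular with respect to $\phi=x$, and then to extract a strict improvement on the Ore bound from a higher-order Newton polygon refinement. Since $\gcd(p,\nu_p(m),n)=p$ forces $p\mid n$ and $p\mid \nu_p(m)$, in particular $p\mid m$, so $\overline{f(x)}=x^n$ in $\F_p[x]$ and I take $\phi=x$. The $\phi$-expansion of $f$ is simply $f(x)=\phi(x)^n-m$, so $N_{\phi}^{+}(f)$ is the single side $S$ joining $(0,\nu_p(m))$ to $(n,0)$, of ramification degree $d=\gcd(n,\nu_p(m))$. Theorem \ref{ore} already gives
\[
\nu_p(\mathrm{ind}(f))\ \ge\ \mathrm{ind}_{\phi}(f)\ =\ \frac{(n-1)(\nu_p(m)-1)+d-1}{2},
\]
so the task reduces to proving that this inequality is strict.

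The first-order residual polynomial is $f_S(y)=y^d-\bar c$ with $\bar c\equiv -m/p^{\nu_p(m)}\pmod p\in\F_p^{\ast}$ (only the endpoints of $S$ are loaded, since $f$ has only two terms in its $\phi$-expansion). Because $p$ divides $\gcd(n,\nu_p(m))=d$, write $d=p\,d'$; since Frobenius is the identity on $\F_p$, one factors
\[
f_S(y)=y^{p d'}-\bar c^{\,p}=(y^{d'}-\bar c)^p\in\F_p[y].
\]
Thus $f_S(y)$ is a non-trivial $p$-th power, not square-free, and $f$ fails to be $p$-regular with respect to $\phi$.

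To recover the missing contribution I invoke the higher-order index formula of Gu\`ardia--Montes--Nart (\cite[Thm.~4.18]{GMN}), which refines Ore's inequality to $\nu_p(\mathrm{ind}(f))=\mathrm{ind}_{1}(f)+\mathrm{ind}_{2}(f)+\cdots$, with all summands non-negative and $\mathrm{ind}_{1}(f)=\mathrm{ind}_\phi(f)$. Fix an irreducible factor $\eta(y)\in\F_p[y]$ of $y^{d'}-\bar c$, so that $\eta^{p}$ divides $f_S$; by Theorem \ref{HNR} there is an associated factor $F(x)\in\Z_p[x]$ of $f$ whose $\phi$-polygon is $S$ and whose first-order residual polynomial is $\eta^p$. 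Take a MacLane lift $g_2(x)\in\Z_p[x]$ of $\eta$ attached to the type $\mathbf t=(x,-\nu_p(m)/n,\eta(y))$, and examine $N_2(F)$ via the $g_2$-expansion of $F$. The polygon has horizontal length $p\cdot\deg(\eta)\ge 2$, and a binomial-coefficient estimate—exploiting $p\mid\binom{p}{k}$ for $0<k<p$—shows that the $\omega_1$-valuations of the intermediate coefficients lie strictly above the straight line joining the endpoints of $N_2(F)$, so $N_2(F)$ contains at least one lattice point in its index region. This forces $\mathrm{ind}_{\mathbf t}(f)\ge 1$, and therefore
\[
\nu_p(\mathrm{ind}(f))\ \ge\ \mathrm{ind}_\phi(f)+1\ >\ \frac{(n-1)(\nu_p(m)-1)+d-1}{2}.
\]

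The main obstacle is the last step: making the convexity claim for $N_2(F)$ rigorous. This requires writing out the $g_2$-expansion far enough to track the $\omega_1$-valuation of each coefficient in terms of $\nu_p(m^p-m)$ and related Frobenius-defect quantities, and then verifying, in every sub-case (especially when $\nu_p(m^p-m)$ is small), that an interior vertex of $N_2(F)$ sits strictly above the chord between its endpoints so that the index region genuinely contains an integer lattice point. Handling this borderline analysis carefully, and checking that no degenerate configuration cancels the extra contribution, is where the subtlety of the argument lies.
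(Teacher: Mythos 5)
You set up the problem exactly as the paper does (take $\ph=x$, note that $p\mid d$ forces $f_S(y)$ to be a $p$-th power, hence $f$ is not $p$-regular, and plan to get strictness from a positive second-order index), but the decisive step is missing: you never prove that the second-order polygon actually contributes, i.e.\ that $\mathrm{ind}_2(f)\ge 1$. You assert that a ``binomial-coefficient estimate'' puts the intermediate coefficients of the $g_2$-expansion strictly above the chord, and then you explicitly concede that verifying this ``in every sub-case'' is the unresolved main obstacle. That unverified claim \emph{is} the content of the proposition: without it, Ore's theorem only yields the non-strict inequality $\nu_p(\mathrm{ind}(f))\ge \mathrm{ind}_\ph(f)$. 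Note also that even granting ``intermediate points above the chord,'' positivity of the index does not follow automatically: since $ind(N_2(f))=0$ whenever $N_2(f)$ has a single side of length $1$ or height $1$, you need a quantitative lower bound on how far the constant coefficient sits above the line together with the fact that the horizontal scale $e_1$ exceeds $1$; and for a generic MacLane lift $g_2$ of an arbitrary irreducible factor $\eta$ of $y^{d'}-\bar c$, the coefficients of the $g_2$-expansion are not controlled by any simple identity involving $\binom{p}{k}$.

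The paper closes precisely this gap by a special choice of lift that makes everything explicit. Writing $n=p^rt$ and $\nu_p(m)=p^su$ with $p\nmid tu$, the side $S$ has slope $-\la_1=-h_1/e_1$, and one takes the binomial lift $\ph_2(x)=x^{e_1}-p^um_p$, whose first-order residual is $y-m_p$. Then $f=(\ph_2+p^um_p)^{n/e_1}-m$ has a completely explicit $\ph_2$-expansion by the binomial theorem, with constant term $p^{\nu_p(m)}(m_p^{p^s}-m_p)$; Fermat's little theorem gives $\nu_p(m_p^{p^s}-m_p)\ge 1$, so the second-order value of this term jumps by at least $e_1$ above the line, whence $\mathrm{ind}_2(f)\ge 1$ as soon as $e_1>1$. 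The case analysis then finishes: $e_1=p^{r-s}t>1$ when $s<r$, $e_1=t>1$ when $s\ge r$ and $t>1$, and the remaining case $s\ge r$, $t=1$ is excluded by $\nu_p(m)<n$. Your outline would still have to carry out an analogous explicit expansion and the same $e_1>1$ case analysis, so as written it is a plausible sketch with an admitted hole rather than a proof.
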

\begin{proof}
First by Proposition \ref{NP} and  Theorem \ref{ore}, we have $\nu_p(ind(f))\ge \frac{(n-1)(\nu_p(m)-1)+d-1}{2}$. Moreover if
gcd$(p,\nu_p(m), n)=1$, then the equality holds. Now assume that gcd$(p,\nu_p(m), n)=p$.
 Let $n=p^rt$ and  $\nu_p(m)=p^su$ with  $p$ does not $ut$. 
For $\ph=x$, $\ol{f(x)}=\ol{\ph(x)}^n$  in $\F_p[x]$, $\nph{f}=S$ has a single side of slope $-\la_1=-\frac{u}{p^{r-s}t}$ and $f_S(y)=(y-m_p)^{p^v}$, where $v=\mbox{min}(s,r)$, we conclude that  $f_S(y)$  is not square free, and so we have to use second order Newton polygon. Let $\la_1=h_1/e_1$ with $e_1$ and $h_1$ are two coprime positive integers. According to Nart's notations in \cite{GMN}, let  $\ph_2(x)=x^{e_1}-p^um_p$ and $V_2$ be the valuation of second order Newton polygon  associated to the data $(x,\la, y-m_p,\ph_2)$; $V_2(\sum_{i=0}^ka_ix^i)=\mbox{min}(e_1\nu_p(a)+i\la,\, i=0,\dots,k)$. Let $f(x)=(x^{p^{r-s}t}-p^um_p+p^um_p)^{p^s}-m=\displaystyle\sum_{i=1}^{p^s}\binom{p^s}{i}(p^um_p)^{p^s-i}\ph_2^i(x)+p^{{p^s}u}(m_p^{p^s}-m_p)$ be the $\ph_2$-expansion of $f(x)$. Since $\nu_p((m_p^{p^s}-m_p))\ge 1$, then $V_2((m_p^{p^s}-m_p))\ge e_1$. Thus if $e_1>1$, then $ind_2(f)\ge 1$, and so $\nu_p(ind(f))\ge ind_1(f)+ind_2(f)>ind_1(f) =\frac{(n-1)(v-1)+d-1}{2}$.
It follows that: 
\begin{enumerate}
\item
If $s<r$, then $e_1=p^{r-s}t>p>$, and so $\nu_p(ind(f))>\frac{(n-1)(v-1)+d-1}{2}$.
 \item
 If $s\ge r$ and $t>1$, then $e_1=t>1$ and $\nu_p(ind(f))>\frac{(n-1)(v-1)+d-1}{2}$.
 \item
  The case  $s\ge r$ and $t=1$ is excluded because $\nu_p(m)$ is assumed to be less than $n$.
\end{enumerate}
\end{proof}
The following theorem gives a condition on $f(x)$ in order to have $K$ is not monogenic.
\begin{thm}\label{mono}
Let $n=p^rt$, $m=p^su$, and $m_p=\frac{m}{p^{s}}=u$ with $p$ does not divide $tu$.
If one of the following holds:
\begin{enumerate}
\item
$p$ is odd and $\nu_p(1- m)\ge p+1$.
\item
$pt$ is odd  and $\nu_p(1+ m)\ge p+1$.
\item
$p=3$, $t$ is even and $\nu_3(1+ m)\ge 4$.
\item
$p=2$, $2$ does not divide $m$, $r=2$, and $\nu_2(m^{p-1}-1)\ge 4$.
\item
$p=2$, $2$ does not divide $m$, $r\ge 3$, and $\nu_2(m^{p-1}-1)\ge 5$.
\item
$p$ is odd, $r=s$,  $t>1$, gcd$(t,p-1)=1$, $r\ge p$, and $\nu_p(m_p^{p}-m_p)\ge p+1$.
\item
$p=2$, $r=s$, $r=2$, and $\nu_2(m_2-1)\ge 4$. 
\item
$p=2$, $r=s$,  $r\ge 3$, and $\nu_2(m_2-1)\ge 5$. 
 \end{enumerate}
  then $K$ is not monogenic.
\end{thm}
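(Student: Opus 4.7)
My plan is to show, in each of the eight cases, that the specified prime $p$ is a common index divisor of $K$. By the classical Dedekind--Hensel criterion, $p\mid[\Z_K:\Z[\theta]]$ for every $\theta\in\Z_K$ with $K=\Q(\theta)$---hence $K$ is not monogenic---whenever, for some residue degree $f\ge 1$, the number of prime ideals of $\Z_K$ above $p$ with residue degree $f$ strictly exceeds $N_p(f)$, the number of monic irreducible polynomials of degree $f$ in $\F_p[x]$. Recall that $N_2(1)=2$ and $N_p(1)=p$ for $p$ odd; so the bar is much lower in the $p=2$ family.

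First I would compute the decomposition of $p\Z_K$ using the Newton polygon machinery of Section 3. In cases (1)--(3), where $p\nmid m$, a short Teichm\"uller-lift calculation using $\nu_p(1\mp m)\ge p+1$ (or $\ge 4$ when $p=3$) shows that for every linear factor $\phi=x-a$ of $x^t\mp 1$ in $\F_p[x]$ one has $\nu_p(a^n-m)\ge p+1$, so the $\phi$-Newton polygon of $f$ falls into the second case of Proposition \ref{NP}: it has $r+1$ sides, each with a residual polynomial of degree $1$. By Theorem \ref{ore} each such $\phi$ therefore contributes $r+1$ prime ideals of residue degree $1$. Cases (4) and (5) are the $p=2$ analogues with $\phi=x+1$. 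In cases (6)--(8), where $p\mid m$ and $r=s$, we have $\overline{f(x)}=x^n$ and the first-order $x$-Newton polygon is a single side with the non-separable residual polynomial $(y-m_p)^{p^s}$. I would then pass to the second-order polygon with respect to the augmented valuation $\omega_2=[e_1\nu_p,x,\lambda_1]$ and the auxiliary polynomial $\phi_2(x)=x^{e_1}-p^{u}m_p$, as in the proof of the second Proposition of Section~4; the hypothesis $\nu_p(m_p^p-m_p)\ge p+1$ (or its $p=2$ variants) forces the second-order polygon to acquire multiple sides with linear residual polynomials, and Theorems \ref{HNP} and \ref{HNR} then deliver the prime decomposition.

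The second step is to count primes of a fixed residue degree and apply the criterion. For $p=2$ (cases 4, 5, 7, 8), the polygon analysis already yields at least three primes of residue degree $1$, which exceeds $N_2(1)=2$; for $p$ odd (cases 1, 2, 3, 6) I would pool the $r+1$ primes produced by each $\phi$-polygon over the $\gcd(t,p-1)$ (or analogous) linear factors of $x^t\mp 1$ in $\F_p[x]$, to exhibit at least $p+1$ primes of residue degree $1$, exceeding $N_p(1)=p$. In each case the Dedekind--Hensel criterion then certifies $p$ as a common index divisor, so $K$ is not monogenic.

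The hardest part will be cases (6)--(8): the first-order $x$-polygon is degenerate, so one must build the correct second-order data $(x,\lambda_1,\phi_2)$, carry out the $\phi_2$-expansion of $f$ under $\omega_2$, and verify that the second-order residual polynomials split into enough linear factors to supply the required count of primes. A secondary difficulty lies in the sharp threshold gap between $p=2$ and $p$ odd, which is exactly why the theorem separates the two families with quantitatively different valuation conditions, and in subcases where $r$ or $\gcd(t,p-1)$ is small, which will require careful pooling of the contributions from different irreducible factors of $\overline{f(x)}$ so that the common index divisor inequality is actually triggered.
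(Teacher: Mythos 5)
Your overall strategy coincides with the paper's: exhibit $p$ as a common index divisor by producing more prime ideals above $p$ of some fixed residue degree than there are monic irreducible polynomials of that degree in $\F_p[x]$ (Lemma \ref{comindex}), using first-order $\ph$-Newton polygons and Proposition \ref{NP} when $p\nmid m$, and the second-order polygon with respect to $\ph_2(x)=x^{e_1}-p^{u}m_p$ together with Theorems \ref{HNP}, \ref{HNR} and Lemma \ref{residue} when $r=s$. For cases (2), (4), (5), (7), (8) and the outline of (6) this is essentially the paper's argument.

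There is, however, a genuine gap in your treatment of case (3). There $p=3$, $t$ is even and $m\equiv -1\md{3}$, so $\ol{f(x)}=(x^t+1)^{3^r}$ in $\F_3[x]$, and $x^t+1$ has \emph{no} roots in $\F_3$ (for $a=\pm1$ and $t$ even one has $a^t=1\neq-1$). Hence there are no linear factors $x-a$ to pool, every prime of $\Z_K$ above $3$ has even residue degree, and your plan of exhibiting at least $p+1=4$ primes of residue degree $1$ produces nothing in this case. This is exactly why the paper switches to a quadratic factor ($\ph=x^2+1$), shows that $\npp{f}$ has at least four sides of degree one, and compares the resulting four primes of residue degree $2$ with the \emph{three} monic irreducible quadratics of $\F_3[x]$. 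A secondary imprecision concerns cases (1)--(2): for a (Teichm\"uller-lifted) linear factor $\ph=x-a$ one is in the \emph{first} case of Proposition \ref{NP} (since $\ph$ divides $x^t\mp1$ in $\Z_p[x]$), and the polygon has $\min\bigl(\nu_p(m^p-m),\,r+1\bigr)$ sides of degree $1$, not automatically $r+1$; the second case of Proposition \ref{NP} needs $\nu_p(m^p-m)\ge r+1$, which $\nu_p(1\mp m)\ge p+1$ does not give. The paper secures $p+1$ such sides from the single factor $x\mp1$ by invoking $r\ge p$ in its proof, whereas your pooled count $\gcd(t,p-1)\cdot\min\bigl(\nu_p(m^p-m),r+1\bigr)$ need not reach $p+1$ either, so the pooling does not close that gap; finally, in your counting step you lump case (6) (where $\ol{f(x)}=x^n$ and no linear factors of $x^t\mp1$ are available) into this pooling, although your separate second-order outline is the correct mechanism for (6)--(8).
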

In order to prove  Theorem \ref{mono},  we need the following two lemmas. The  first one is an immediate consequence of {Dedekind's} theorem. The second one follows from the \cite[Corollary 3.8]{GMN}. 
\begin{lem} \label{comindex}
 Let  $p$ be  rational prime integer and $K$  a number field. For every positive integer $f$, let $P_f$ be the number of distinct prime ideals of $\Z_K$ lying above $p$ with residue degree $f$ and $N_f$  the number of monic irreducible polynomials of  $\F_p[x]$ of degree $f$.
{ If $ P_f > N_f$ for some
positive integer $f$}, then for every generator $\th\in \Z_K$ of $K$, $p$ divide the index  $(\Z_K:\Z[\th])$.
\end{lem}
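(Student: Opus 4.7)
The plan is to prove Lemma \ref{comindex} by contraposition, relying entirely on Dedekind's theorem (Theorem \ref{Ded}). Specifically, I will assume there exists a generator $\th\in\Z_K$ of $K$ with $p\nmid (\Z_K:\Z[\th])$, and then show that the inequality $P_f\le N_f$ must hold for every positive integer $f$.

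Under that assumption, let $g(x)\in\Z[x]$ denote the minimal polynomial of $\th$ over $\Q$, and write its reduction as $\ol{g(x)}=\prod_{i=1}^r\ol{\ph_i(x)}^{l_i}$ in $\F_p[x]$, where the $\ol{\ph_i}$ are pairwise distinct monic irreducible polynomials. Since $p\nmid(\Z_K:\Z[\th])$, Dedekind's theorem yields
$$p\Z_K=\prod_{i=1}^r\p_i^{l_i},$$
with the prime ideals $\p_i$ pairwise distinct, the ramification index of $\p_i$ over $p$ equal to $l_i$, and the residue degree of $\p_i$ over $p$ equal to $\deg(\ph_i)$. In particular, the map sending a prime ideal $\p_i$ lying above $p$ to the distinct irreducible factor $\ol{\ph_i}$ of $\ol{g(x)}$ is a well-defined injection, and it preserves residue degree in the sense that $\p_i$ has residue degree $f$ if and only if $\deg(\ol{\ph_i})=f$.

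Fixing any positive integer $f$, it follows that $P_f$ equals the number of distinct monic irreducible factors of $\ol{g(x)}$ of degree $f$ in $\F_p[x]$; this number is clearly bounded above by the total count of monic irreducible polynomials of degree $f$ in $\F_p[x]$, namely $N_f$. Hence $P_f\le N_f$ for every $f$, contradicting the hypothesis that $P_f>N_f$ for some $f$. This shows that $p$ must divide $(\Z_K:\Z[\th])$ for every choice of generator $\th\in\Z_K$, completing the proof.

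The argument is essentially routine once Dedekind's theorem is invoked; there is no real technical obstacle. The only subtlety worth making explicit is the bijective correspondence between the primes above $p$ and the distinct irreducible factors of $\ol{g(x)}$ in $\F_p[x]$ (as opposed to factors counted with multiplicity), which is precisely the content of Dedekind's theorem as recorded in Theorem \ref{Ded} and guarantees that $P_f$ is counted without multiplicity when compared with $N_f$.
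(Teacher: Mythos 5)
Your proof is correct and is exactly the argument the paper intends: the paper simply remarks that the lemma "is an immediate consequence of Dedekind's theorem," and your contrapositive — using the degree-preserving bijection between primes above $p$ and the distinct irreducible factors of $\ol{g}$ to get $P_f\le N_f$ — is the standard way to make that remark precise. No issues.
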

\begin{lem} \label{residue}
 Let $p$ be a prime integer, $f(x)\in \Z_p[x]$ a monic  polynomial such that $\ol{f(x)}$ is a  power of $\ol{\ph(x)}$ for some monic polynomial $\ph\in \Z_p[x]$, whose reduction is irreducible over $\F_p$, $\nph{f}=S$ has a single side of slope $-\la_1$, $f_S(y)=\psi^a(y)$ for some monic irreducible polynomial $\psi\in \fph[y]$, and $N_2(f)=T$ has a single side of slope $-\la_2$. Let $e_i$ be the smallest positive integer satisfying $e_i\la_i\in \Z$. If $R_2(f)$ is irreducible over $\F_2=\frac{\fph[y]}{(\psi(y))}$, then $f(x)$ is irreducible over $\Q_p$. Let $\p$ be the unique prime ideal of $\Q_p(\beta)$ lying above $p$, where $\beta$ is a root of $f(x)$. Then $e(\p)=e_1e_2$ is  the ramification index of $\p$  and $f(\p)=\mbox{deg}(\ph)\times \mbox{deg}(\psi)\times \mbox{deg}(R_2(f))$ is its residue degree.
\end{lem}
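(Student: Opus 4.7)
The plan is to derive both assertions of the lemma from the higher-order Newton polygon machinery of Montes--Gu\`ardia--Nart recalled just above, namely Theorems~\ref{HNP} and~\ref{HNR} together with the cited \cite[Corollary 3.8]{GMN}. The hypotheses precisely encode a \emph{complete type} of order $2$
$$ \mathbf{t} = (\ph,\, -\la_1,\, g_2,\, -\la_2,\, \psi_2), $$
where $\psi_1 = \psi$ is the irreducible monic polynomial associated to $f_S(y) = \psi^{a}(y)$ at first order, $g_2(x) \in \Z_p[x]$ is the polynomial of minimal degree produced from the first-order data in the standard way, and $\psi_2 = R_2(f)$ is the second-order residual polynomial, which by hypothesis is irreducible over $\F_2 = \fph[y]/(\psi(y))$.

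To prove irreducibility of $f(x)$ over $\Q_p$, I would first apply Theorem~\ref{HNP} at first order: the single-sided polygon $\nph{f} = S$ forces any factorization of $f$ in $\Z_p[x]$ to be compatible with that slope. Theorem~\ref{HNR} applied to $f_S(y) = \psi^{a}$ then confines every irreducible factor of $f$ to the first-order type $(\ph, -\la_1, \psi)$, so the analysis can be continued with the same $g_2$. At second order, Theorem~\ref{HNP} applied to the single side $N_2(f) = T$ again precludes splitting, and the irreducibility of $R_2(f)$ combined with Theorem~\ref{HNR} (which gives, for the factorization $R_2(f) = \psi_2^{1}$, a single factor of multiplicity one) yields exactly one irreducible factor of $f$. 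Since $R_2(f)$ is separable, the iteration terminates at order two, which is precisely condition (5) in the definition of a type recalled above; hence $f(x)$ is irreducible over $\Q_p$.

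For the ramification and residue degrees of the unique maximal ideal $\p$ of $\Q_p(\beta)$ above $p$, I would invoke \cite[Corollary 3.8]{GMN} applied to the complete type $\mathbf{t}$: the corollary produces $e(\p)$ as the product of the denominators $e_1, e_2$ of the slopes $-\la_1, -\la_2$, hence $e(\p) = e_1 e_2$, and $f(\p)$ as the product of the degrees of the successive residual polynomials $\ol{\ph},\, \psi,\, R_2(f)$, yielding $f(\p) = \mbox{deg}(\ph) \cdot \mbox{deg}(\psi) \cdot \mbox{deg}(R_2(f))$.

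The main obstacle, and the only genuinely delicate point, is guaranteeing that no further iteration of the Newton polygon procedure is required to reach a complete type. This is ensured by the separability of $R_2(f)$ (being irreducible, it is in particular square-free), which by the discussion following Theorem~\ref{ore} forces $ind_3(f) = 0$, so that the tower $\F_0 \subset \F_1 \subset \F_2$ of residue fields truly stops at $\F_2$ and the ramification/residue-degree formulas read off from $\mathbf{t}$ are the final ones.
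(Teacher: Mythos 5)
Your proposal is correct and takes essentially the same route as the paper, which gives no written proof of Lemma~\ref{residue} beyond the remark that it ``follows from \cite[Corollary 3.8]{GMN}''; you simply make explicit the underlying appeal to Theorems~\ref{HNP} and~\ref{HNR} to rule out splitting at each order and to the $e$- and $f$-product formulas of that corollary. No gaps.
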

\begin{proof} of Theorem \ref{mono}.\\
\begin{enumerate}
\item
 Since $x^t-1$ is separable over $\F_p$, if $m\equiv 1\md{p}$, then $\ol{f(x)}=(x^t-1)^{p^r}=(\ol{\ph(x)Q(x)})^{p^r}$ in $\F_p[x]$, where $\ph=x-1$ and $Q(x)\in \Z[x]$ with $\ol{\ph}$ does not divide $\ol{Q(x)}$ in $\F_p[x]$. Since $x-1$ divides $x^t-1$ in $\Z[x]$, by the first part of Proposition \ref{NP}, if $\nu_p(m-1)\ge p+1$ and $r\ge p$, then $\npp{f}$ has at least $p+1$ sides of degree $1$ each one. Thus by Theorem \ref{ore}, there are at least $p+1$ prime ideals of $\Z_K$ lying above $p$ of residue degree $1$ each one. By Lemma \ref{comindex} and  the fact the there are only $p$-monic irreducible polynomial of degree $1$ in $\F_p$, $p$ is a common index divisor of $K$, and so $K$ is not monogenic.
\item
For the same argument, if  $pt$ is odd  and $\nu_p(1+ m)\ge p+1$, then for $\ph=x+1$, $\npp{f}$ has at least $p+1$ sides of degree $1$ each one, and so $p$ is a common index divisor of $K$.
\item
For the same argument, if $p=3$, $t$ is even and $\nu_3(1+ m)\ge 4$, then for $\ph=x^2+1$, $\npp{f}$ has at least $4$ sides of degree $1$ each one. Thus by Theorem \ref{ore}, there at least $4$ prime ideals of $\Z_K$ lying above $3$ with residue degree $2$ each one. The fact that there are only three monic irreducible polynomial  polynomial of degree $2$ in $\F_3[x]$, namely, $x^2+1$, $x^2+x-1$, and $x^2-x-1$, we conclude that $3$ is a common index divisor of $K$.
\item
For $p=2$ and $2$ does not divide $m$, we have $\ol{f(x)}=(x^t-1)^{2^r}=(\ol{\ph(x)Q(x)})^{2^r}$ in $\F_2[x]$, where $\ph=x-1$ and $Q(x)\in \Z[x]$ with $\ol{\ph}$ does not divide $\ol{Q(x)}$ in $\F_2[x]$. Since $x-1$ divides $x^t-1$ in $\Z[x]$, by the first point of Proposition \ref{NP}, we have $\npp{f}$ has at least three sides of degree $1$ each one. Thus by Theorem \ref{ore}, there are at least $3$ prime ideals of $\Z_K$ lying above $2$ of residue degree $1$ each one. Therefore, by Lemma \ref{comindex}, $2$ is a common index divisor of $K$, and so $K$ is not monogenic.
\item
For the last three points,   first $\ol{f(x)}=\ph(x)^{n}$ in $\F_p[x]$ with $\ph=x$. Since $\nph{f}=S$ has a single side of slope $-\la_1=-u/t$ and  $f_S(y)=(y-m_p)^{p^r}$ (because $r=s$), we have to use second order Newton polygon techniques. 
Let $\la_1=h_1/e_1$ with $e_1$and $h_1$ are two coprime positive integers. Then   $e_1=t$, $\ph_2(x)=x^{t}-p^um_p$. Let $f(x)=(x^{t}-p^um_p+p^um_p)^{p^r}-m=\displaystyle\sum_{i=1}^{p^r}\binom{p^r}{i}(p^um_p)^{p^r-i}\ph_2^i(x)+p^{{p^r}u}(m_p^{p^r}-m_p)$  be the $\ph_2$-expansion.  Let also $\om_2$ be the valuation of second order Newton polygon associated to $(x,\frac{\nu_p(m)}{n}, \psi(y)$, where $\psi(y)=y-m_p$. It follows that:
 \begin{enumerate}
\item
 If $p$ is odd, $r\ge p$, and $\nu_p(m_p^{p}-m_p)\ge p+1$, then $N_2(f)=S_1+ S_2+\dots+S_g$, the $\ph_2$-Newton polygon of $f(x)$ with respect to $\om_2$ has $g$ sides $S_1,\, S_2,\,\dots, S_g$ with $g\ge p+1$. Since for every $l(S_{g-i})=p^{i}-p^{i-1}=p^{i-1}(p-1)$ is the length of $S_{g-i}$ and $h(S_{g-i})=t$ is its heigh  and  gcd$(t,p(p-1))=1$, then the side $S_{g-i}$ is of degree $1$ for every $i=0,\dots,p$.
  By Theorems \ref{HNP} and \ref{HNR}, $f(x)=g(x)\times f_1(x)\times \dots\times f_{p+1}(x)$ with 
     deg$(R_2(f_i)(y))=1$ for every $i=1,\dots,g$.
   By Hensel's correspondence, let  $\p_i$ be the prime ideal of $\Z_K$ lying above $p$ and associated to the factor  $f_i(x)$ for every $i=1,\dots,p+1$. Then by Lemma \ref{residue}, $f(\p_i)=1$ for every $i=1,\dots,p+1$. Since there is only $p$ monic irreducible polynomial in $\F_p[x]$, by Lemma \ref{comindex}, $p$ is a common index divisor  of $K$, and so $K$ is not monogenic.
 \item
If $p=2$, $r=2$, and $\nu_2(m_p-1)\ge 4$, then by analogous to the previous  point,
$N_2(f)$ has  exactly $4$ sides of degree gcd$(t,2)=1$ each one.
 Thus there are  $4$ prime ideals of $\Z_K$ lying above $2$ with residue degree $1$ each one. Since there is only $2$ monic irreducible polynomial in $\F_2[x]$, we conclude $2$ is a common divisor index of $K$, and so $K$ is not monogenic.
\item
If $p=2$, $r\ge 3$, and $\nu_2(m_p-1)\ge 5$, then $N_2(f)$  has at least $4$ sides of which at least $3$ are of degree $1$ each one. 
 Thus there are at least $3$ prime ideals of $\Z_K$ lying above $2$ with residue degree $1$ each one. Since there is only $2$ monic irreducible polynomial in $\F_2[x]$, we conclude that $2$ is a common divisor index of $K$, and so $K$ is not monogenic. 
\end{enumerate}
\end{enumerate}
\end{proof}
\section{Examples}
\begin{enumerate}
\item
Let $n\ge 2$ be an integer and $m=(n^*)^u$, where $n^*=\displaystyle\prod_{p\in I}p$, $I$ is the set of positive prime integers dividing $n$, and $u$ is coprime to $n$. Then $f(x)=x^n-m$ is irreducible over $\Q$. Let $K$ be the pure number field  defined by $f(x)$. Then $K$ is monogenic.
\item
Let $f(x)=x^{48}-528$. Then $f(x)$ is $3$-Eisenstein, and so  is irreducible over $\Q$. Let $K$ be the pure number field  defined by $f(x)$. Since  $n=2^4\times 3$,  $m=2^4\times 33$, $m_2=33$, and $\nu_2(m_2-1)=5$, we conclude by Theorem \ref{mono} (8), that $K$ is not  monogenic.
\item
Let $f(x)=x^{135}+2214$. Then $f(x)$ is $2$-Eisenstein, and so  is irreducible over $\Q$. Let $K$ be the pure number field  defined by $f(x)$. Since  $n=3^3\times 5$,  $m=-3^3\times(1+3^4)$, $m_3=-(1+3^4)$, $\nu_3(m_3+1)=4$, and gcd$(5,3\cdot 2)=1$, we conclude by Theorem \ref{mono} (3), that $K$ is not  monogenic.
\item
Let $f(x)=x^{135}-2214$. Then $f(x)$ is $2$-Eisenstein, and so  is irreducible over $\Q$. Let $K$ be the pure number field  defined by $f(x)$. Since  $n=3^3\times 5$,  $m=3^3\times(1+3^4)$, $m_3=(1+3^4)$,  $\nu_3(m_3-1)=4$, and gcd$(5,3\cdot 2)=1$, we conclude by Theorem \ref{mono} (1), that $K$ is not  monogenic. 
\end{enumerate}


\begin{thebibliography}{99}
\bibitem{AN6} {\sc S. Ahmad, T. Nakahara, and S. M. Husnine}, {\em Power integral bases for certain pure sextic
fields}, Int. J. of Number Theory v:10, No 8 (2014) 2257--2265. 
\bibitem{AN} {\sc S. Ahmad, T. Nakahara, and A. Hameed}, {\em On certain pure sextic fields related to a problem of Hasse}, Int. J. Alg. and Comput. 26(3) (2016) 577--583 
\bibitem{HN8} {\sc A. Hameed  and  T. Nakahara}, {\em Integral  bases  and  relative  monogeneity  of pure octic fields}, Bull. Math. Soc. Sci. Math. R épub. Soc. Roum. 58(106) No. 4(2015) 419--433
\bibitem{B2}{\sc M. Bauer},  {\em \"Uber die ausserwesentliche Diskriminantenteiler einer Gatlung}, Math. Ann. {\bf 64} (1907) 572--576.
\bibitem{Co} H. Cohen, {\em A Course in Computational Algebraic Number Theory}, GTM 138, Springer-Verlag Berlin  Heidelberg (1993)
\bibitem{E6} L. El Fadil, {\em On Power integral bases for certain pure sextic fields} (To appear in a forthcoming issue of Bol. Soc. Paran. Math.)
\bibitem {E24} L. El Fadil, {\em On Power integral bases for certain pure number fields defined by $x^{24}-m$ }, Stud. Sci. Math. Hung. {\bf 57}(3) (2020) 397–407
\bibitem{E12} L. El Fadil, {\em On Power integral bases for certain pure number fields} ( To appear in a forthcoming issue of  Pub. Math. Deb.)
\bibitem{E36} L. El Fadil, {\em On Power integral bases for certain pure number fields defined by $x^{36}-m$} ( To appear in a forthcoming issue of  Stud. Sci. Math. Hung.)
\bibitem{E23k} L. El Fadil, {\em On Power integral bases for certain pure number fields defined by $x^{2\cdot 3^k}-m$} ( To appear in a forthcoming issue of  Acta. Arith.)
\bibitem{El} L. El Fadil,\emph{ On Newton polygon's techniques and factorization of polynomial over henselian valued fields},  J. of Algebra and  its Appl.   19(10) (2020) 2050188
\bibitem{EMN}{\sc L. El Fadil}, {\sc J. Montes} and 
{\sc E. Nart}, {\em Newton polygons and $p$-integral bases of quartic number fields}, 
J. Algebra and Appl. 11(4) (2012) 1250073
\bibitem{F4} {\sc T. Funakura}, {\em On integral bases of pure quartic fields}, Math. J. Okayama Univ. 26 (1984) 27-–41
\bibitem{Ga} {\sc  I. Ga\'al}, {\em Power integral bases in algebraic number fields}, Ann. Univ. Sci. Budapest. Sect. Comp. 18 (1999) 61--87 
\bibitem{G19} {\sc I. Ga\'al}, Diophantine equations and power integral bases, Theory and algorithm, Second edition, Boston, Birkh\"auser, 2019
\bibitem{GO} {\sc  I. Ga\'al, P. Olajos, and M. Pohst}, {\em Power integral bases in orders of composite fields}, Exp. Math. 11(1) (2002) 87–90. 
\bibitem{GR4} {\sc  I. Ga\'al and L. Remete}, {\em Binomial Thue equations and power integral bases in  pure  quartic  fields}, JP  Journal  of  Algebra  Number  Theory  Appl. 32(1) (2014) 49--61
\bibitem{GR17} {\sc  I. Ga\'al and L. Remete}, {\em Power integral bases and monogeneity of pure  fields}, J. of Number Theory 173 (2017) 129--146 
\bibitem{G} {\sc  T.A. Gassert}, {\em A note on the monogeneity of power maps}, 
Albanian J. of  Math. {\bf 11(1)} (2017) 3--12
\bibitem{GMN} J. Guardia, J. Montes, and E. Nart,\emph{ Newton polygons of higher order in algebraic number theory}, J. trans. of ams  {\bf 364}(1) (2012)  361--416
\bibitem{He} {\sc K. Hensel}, {\em Theorie der algebraischen Zahlen}, Teubner Verlag, Leipzig, Berlin, 1908.
\cite{H}.\bibitem{H}{\it  K. Hensel}: {Untersuchung der Fundamentalgleichung einer Gattung f\"ur eine reelle Primzahl als Modul und Bestimmung der Theiler ihrer	Discriminante}. J. Reine Angew. Math. {\bf 113} (1894), 61--83.
\bibitem{Mc}{\it S.MacLane}: {A construction for absolute values in polynomial rings}. Trans. Amer. Math. Soc. 
	{\bf 40} (1936), 363--395
\bibitem{MN}  J. Montes and E. Nart,\emph{ On a theorem of Ore}, J.  Algebra {\bf 146}(2) (1992) 318--334
\bibitem{O}{\sc O. Ore}, {\em Newtonsche
Polygone in der Theorie der algebraischen Korper}, Math. Ann., 99
(1928), 84--117
\bibitem{P}{\sc A. Peth\"o and M. Pohst},  {\em On the indices of multiquadratic number fields}, Acta Arith. 153(4) (2012) 393--414
\bibitem{R} R. Dedekind,  {\em \"Uber den Zusammenhang zwischen der Theorie der Ideale und der Theorie der h\"oheren Kongruenzen}, G\"ottingen Abhandlungen {\bf 23} (1878) 1--23
\end{thebibliography}
\end{document}